\newcommand*{\C}{\mathcal C}
\newcommand*{\R}{\mathbb R}
\let\I\C
\newcommand*{\Rp}{\R^+}
\newcommand*{\finf}[1][\delta]{f_{J, #1}^\infty}
\newcommand*{\finfU}[1][\delta]{{\vphantom{f}}_U\kern-.05em f_{J, #1}^\infty}
\newcommand*{\intRp}{\int_{\Rp}}
\newcommand*{\z}{\mathbf{z}}
\newcommand*\pd[2]{\frac{\partial #1}{\partial #2}}
\renewcommand*{\*}{_\ast}
\renewcommand{\(}{\begin{equation}}
\renewcommand{\)}{\end{equation}}
\DeclareMathOperator{\sign}{sign}
\DeclareMathOperator{\signep}{sign_\epsilon}
\DeclareMathOperator{\sed}{sign^\prime_\epsilon}
\newif\ifbalancedquotes % booleano per decidere se aprire o chiudere le 
\def\virgolette{\ifbalancedquotes % vogliamo aprirle
                    \char96\char96 % \char96 = ` = virgoletta aperta (accento)
                    \global\balancedquotesfalse % virgolette aperte
                \else
                   \char39\char39 % \char39 = ' = virgoletta chiusa (apostrofo)
                  \global\balancedquotestrue % virgolette chiuse
\fi}
\newif\ifautoquote % booleano: vogliamo le virgolette automatiche?
\newif\ifactivequote % booleano: " è un carattere attivo?
\let\epsilon=\varepsilon
\let\phi=\varphi
\DeclarePairedDelimiter{\lrp}{(}{)} % left and right parentheses
\DeclarePairedDelimiter{\set}{\lbrace}{\rbrace}         % set
\DeclarePairedDelimiter{\abs}{\lvert}{\rvert}           % absolute value
\DeclarePairedDelimiter{\bracket}{\langle}{\rangle}     % expected value
\DeclarePairedDelimiter{\norma}{\lVert}{\rVert}          % norm
\newtheorem{theorem}{Theorem}[section]
\newtheorem{proposition}[theorem]{Proposition}
\newtheorem{cor}[theorem]{Corollary}
\theoremstyle{remark}\newtheorem{remark}[theorem]{Remark}
\colorlet{corr}{magenta}
\def\reply#1{\textcolor{corr}{#1}}
\def\replydec{\color{corr}}
\colorlet{corr}{black}
\begin{document}
\title{On the optimal control of kinetic epidemic models with uncertain social 
features}

\author[1]{J. Franceschi\thanks{\tt jonathan.franceschi01@universitadipavia.it 
}}
\author[1]{A. Medaglia\thanks{\tt andrea.medaglia02@universitadipavia.it }}
\author[1]{M. Zanella\thanks{\tt mattia.zanella@unipv.it}}
		
\affil[1]{Department of Mathematics ``F. Casorati'', University of Pavia, Italy}
\date{}

\maketitle

\begin{abstract}
It is recognized that social heterogeneities in terms of the contact distribution have a strong influence on the spread of infectious diseases. Nevertheless, few data are available \reply{on the group composition of social contacts,} and  their statistical description does not possess universal patterns and may vary spatially and temporally. It is therefore essential to design \reply{robust} control strategies, mimicking the effects of non-pharmaceutical interventions, to limit efficiently the number of infected cases. In this work, starting from 
a recently introduced kinetic model for epidemiological dynamics that takes 
into account the impact of social contacts of individuals, we consider an 
uncertain contact formation dynamics leading to slim-tailed as well as 
fat-tailed distributions of contacts. Hence, we analyse the effects of an \reply{optimally robust} control strategy of the system of agents. Thanks to classical methods of kinetic theory, we couple uncertainty quantification methods with the introduced mathematical model to assess the effects of social limitations. Finally, using the proposed modelling approach and starting from available data, we show the effectiveness of the proposed selective measures to dampen uncertainties together with the epidemic trends.

\medskip

\noindent{\bf Keywords:} kinetic models, mathematical epidemiology, optimal 
control, non-pharmaceutical interventions, multi-agent systems

\noindent{\bf Mathematics Subject Classification:} 92D30, 35Q84, 35Q92
\end{abstract}

\tableofcontents
\section{Introduction}

In recent years extensive research efforts have been devoted to design 
effective non-pharmaceutical interventions (NPIs) to mitigate the impact of the 
COVID-19 pandemics \cite{APZ21_a,Gatto20,Gatto2,McQuade21,Imperial,pellis}. In 
particular, several works in mathematical epidemiology shed light on the 
importance of the inner heterogeneity in the social structure of a population, 
see \cite{APZ21_b,Dimarco21,Dolbeault21,Zhang20}. In this direction, among the main factors shaping the evolution of the epidemic, the contact structure of a 
population has been deeply studied especially in relation to the age 
distribution of a population. Special attention was recently paid by the 
scientific community to the role and the estimate of the distribution of 
contacts between individuals as also a relevant cause of the potential pathogen 
transmission \cite{Beraud15,fumanelli12,britton2020}. Nevertheless, we have 
often limited information on the real social features of a population, whose 
characteristics are structurally uncertain and may frequently change due to 
exogenous processes that are also influenced by psychological factors, 
determining different responses in terms of individuals' protective behavior, 
see e.g. \cite{durham12,giambiagi21}. 

Starting from the above considerations, recent works proposed kinetic-type 
models to connect the distribution of social contacts with the spreading of a 
disease in multi-agent systems \cite{DPTZ20,Dimarco21,Medaglia21,Zanella21}. 
The result is obtained by integrating a compartmental modeling approach for 
epidemiological dynamics with a thermalization process determining the 
formation of social contacts. We highlight how the advantages of kinetic 
modeling approaches for epidemiological dynamics rely on a clear connection 
between the scales of the transmission of the infection, linking agent-based 
dynamics with the macroscopic observable ones. Within this research framework, 
we mention \cite{LT,DMLT} where epidemiological relevant states are 
characterized by agent-based viral load dynamics. 

In this paper, we concentrate on a classical SEIR compartmentalization of the 
population whose contact distribution is uncertain. In particular, we introduce 
an interaction scheme describing the evolution in the number of social contacts 
of individuals. The microscopic model is based on a simple transition operator 
whose parameters are assumed to be uncertain. At the kinetic level, the 
aforementioned model is capable to identify a variety of equilibrium 
distributions, ranging from slim-tailed Gamma-type distributions to 
power-law-type distributions depending on the introduced uncertainties. \reply{In the introduced setting, the analysis of the emerging distribution is essential to define the evolution of the main moments of the system of kinetic equations via a closure approach determining the evolution of macroscopic quantities. In particular, we will consider stationary states that depend on uncertain quantities thus, the derived   
system of equations embeds an incomplete knowledge on the real distribution of contacts.}

Therefore, the definition of effective NPIs, generally based on a generalized  reduction of the number of contacts, should take into account the uncertain   contact structure of a population. In particular, we aim at giving a deeper  understanding of the mitigation effects due to the reduction of social  interactions among individuals. To this end, we develop an approach sufficiently robust in terms of the introduced uncertainties. This is done through a combination of a kinetic epidemiological model and a control strategy whose target is to point the population towards a given target number of contacts. The development of  control protocols for kinetic and mean-field equations has been deeply investigated in recent years, without pretending to review the huge literature we mention \cite{Albi14,albi18,ACFK,FPR14,PDT19} and the references therein.  
In detail, we concentrate on modeling the lockdown policies through a 
selective optimal control approach. In particular, we show how the form of the implemented control may result in very different mitigation effects, that deeply depend on the heterogeneity in the contact distribution of the population. In the last part, starting from the calibrated model at our disposal, we focus on the numerical study of the proposed approach and we exploit accurate methods for the uncertainty quantification of kinetic equations. 

The rest of the paper is organized as follows. In Section \ref{sect:2} we 
introduce a system of kinetic equations with SEIR compartmentalization 
combining the dynamics of social contacts with the spread of an infectious 
disease in a multi-agent system. The main features of the solution of a 
surrogate Fokker-Planck model are studied in Section \ref{sec:uniqueness}. In 
Section \ref{sect:3} a control strategy is introduced at the kinetic level and 
in Section \ref{sect:4} we observe the effects of the control on the 
corresponding second-order macroscopic model. Finally, in Section 
\ref{sec:numerics} we investigate numerically the relationship between the 
kinetic epidemic model with uncertainties and its macroscopic limit. A second 
part is dedicated to the interface between the introduced modeling approach and 
available data.

\section{Kinetic epidemic models with uncertain contact 
distribution}\label{sect:2}
In this section, we introduce a compartmental model describing the spreading of 
an infectious disease coupled with a kinetic-type description of the contact 
evolution of a system of individuals \cite{Dimarco22, Dimarco21, Zanella21, 
Medaglia21}. In addition, we will also take into account uncertainties 
collecting the missing information on the contact distribution.

In more details, we consider a system of agents that can be subdivided into the 
following relevant epidemiological states \cite{Hethcote00, Brauer19, 
Diekmann00}: susceptible (S) agents are the ones that can contract the disease, 
infectious agents (I) are responsible for the spread of the disease, exposed 
(E) agents have been in contact with infectious ones but still may or may not 
become contagious; finally, removed (R) agents cannot spread the disease. 

To incorporate the impact of contact distribution in the infectious dynamics, 
we denote by $f_J = f_J(\z,x,t)$ the distribution of \reply{the number of contacts $x\in \mathbb R^+$} at time $t\ge 0$ 
of agents in compartment^^>$J$, where $J \in \C \coloneqq \set{S, E, I, R}$. 
The random vector $\z \in I_z \subseteq \mathbb{R}^{d_{\z}}$, with $d_{\z} \in 
\mathbb{N}$, collects all the uncertainties of the system and we suppose to 
know its distribution $p(\z)$ such that
\[
\textrm{Prob}(\z\in I_z)=\int_{I_z} p(\z) d \z.
\]
We define the total contact distribution of a society as
\[
\sum_{J \in \C} f_J(\z,x,t) = f(\z,x,t),\qquad \intRp f(\z,x,t) \, dx = 1,
\]
while the mass fractions of the population in each compartment and their moment 
of order $r>0$ are given by 
\[
\rho_J(\z,t) = \intRp f_J(\z,x,t) \, dx, \qquad \rho_J(\z,t)m_{r,J}(\z,t) = 
\intRp x^r f_J(\z,x,t)dx. 
\]
In the following, to simplify notations we will indicate with $m_J(\z,t)$, $J 
\in \C$, the mean values corresponding to $r =1$.

Hence, we assume that the introduced compartments in the model could act 
differently at the level of the social process constituting the contact 
dynamics. The kinetic model defining the time evolution of the 
functions^^>$f_J(\z, x, t)$ follows by combining the epidemic process with the 
contact dynamics. This gives the system
\begin{equation}
\left\lbrace
\begin{aligned}
\pd{f_S(\z,x,t)}{t}  &= -K(f_S, f_I)(\z,x,t) 
                     + \frac{1}{\tau} Q_S(f_S)(\z,x,t),\\ 
\pd{f_E(\z,x,t)}{t}  &= K(f_S, f_I)(\z,x,t) - \reply{\zeta} f_E(\z,x,t)
                     + \frac{1}{\tau} Q_E(f_E)(\z,x,t),\\
\pd{f_I(\z,x,t)}{t}  &= \reply{\zeta} f_E(\z,x,t) - \reply{\gamma} f_I(\z,x,t)
                     +\frac{1}{\tau} Q_I(f_I)(\z,x,t),\\
\pd{f_R(\z,x,t)}{t}  &= \gamma f_I(\z,x,t)
                     + \frac1\tau Q_R(f_R)(\z,x,t),
\end{aligned}
\right.
\label{eq:seisc1}
\end{equation}
where \reply{the operators $Q_J(f_J)$ characterizes the emergence of the distribution of social contacts in the compartment $J \in \mathcal C$.} The transmission of the 
infection is governed by the local incidence rate defined as 
\begin{equation}\label{eq:K}
K(f_S, f_I)(\z,x,t) = f_S(\z,x,t) \intRp \kappa(x,x\*) 
f_I(\z,x\*,t)\, d x\*
\end{equation}
where $\kappa(x,x\*)$ is a nonnegative contact 
function measuring the impact of contact rates among different compartments. A 
leading example for $\kappa(x,x_*)$ is obtained by choosing 
\[
\kappa(x,x\*) = \beta x^\alpha x_*^\alpha, 
\] 
with $\beta>0$ and $\alpha>0$.  In the following, we will stick to the case 
$\alpha=1$ for simplicity so that
\begin{equation}
\label{eq:Kalpha1}
K(f_S,f_I)(\z,x,t) = \beta x f_S(\z,x,t) m_I(\z,t) \rho_{I}(\z,t). 
\end{equation}
This choice formalizes an incidence rate that is proportional on the product of 
the number of contacts of susceptible and infected people. 
The other epidemiological parameters characterizing the spread of the disease 
are $\zeta>0$, the transition rate of exposed individuals to the infected 
class and $\gamma>0$, the recovery rate.  The introduced 
parameters have been summarized in Table \ref{tab:parameters}.
\begin{table}
\begin{center}
\begin{tabular}{cl}
\toprule
 Parameter & Definition\\
\toprule
$\beta$ & contact rate between susceptible and infected individuals\\
$1/\zeta$ & average latency period\\
$1/\gamma$ & average duration of infection\\
\bottomrule
\end{tabular}
\end{center}
\caption{Parameters definition in the SEIR model \eqref{eq:seisc1}.}
\label{tab:parameters}
\end{table}
Finally, the relaxation parameter $0 < \tau \ll 1$ represents the frequency at 
which the agents modify their contact distribution in response to the epidemic 
dynamics. As we will see, we are assuming that the social dynamics is much 
faster than the epidemic dynamics \cite{Zhang20}.

\subsection{Contact formation dynamics}

The total number of contacts can be viewed as a result of the superimposition of repeated updates and possible deviations due to aleatoric uncertainty, see 
\cite{Pareschi13, Furioli17}. In particular, similarly to \cite{Dimarco21, 
Dimarco22} we consider the following microscopic scheme
\begin{equation} \label{eq:binint}
x_J^\prime = x - \Phi_\epsilon^\delta(\z,x/m_J)x + \eta_\epsilon x,    
\end{equation}
where \reply{$x_J'-x$ is the elementary variation of the number of contacts} and $\Phi_\epsilon^\delta$ defines the transition function 
\begin{equation} \label{eq:tr_func}
\Phi_\epsilon^\delta(\z,s) = \mu 
\frac{e^{\epsilon(s^\delta-1)/\delta}-1}{e^{\epsilon(s^\delta-1)/\delta}+1}, 
\qquad s=x/m_J,
\end{equation}
with $\epsilon>0$. In \eqref{eq:tr_func} we introduced a constant $\mu>0$ linked to the maximum variability of the function and the centered random variable $\eta_\epsilon$ such that $\left \langle \eta^2_\epsilon \right\rangle =\epsilon \sigma^2$, being $\left \langle\cdot \right\rangle$ the expectation with respect to the introduced random variable. The constant $\epsilon>0$ tunes the strength of 
interactions. We remark that the microscopic model \eqref{eq:binint} depends on 
a parametric uncertainty and $\delta = \delta(\z)$, such that  $\delta(\z)\in 
[-1,1]$, for any $ \z \in \R^{d_{\z}}$. The transition function 
\eqref{eq:tr_func} is defined such that it is simpler to reach a high number of 
daily contacts while it is very unlikely to go under a certain threshold. This 
type of asymmetry is typical of human and biological phenomena as shown e.g. in 
\cite{Preziosi21, Medaglia22, Gualandi19, Dimarco21, Dimarco22, Toscani21}. In 
the regime $\epsilon \ll 1$ we have
\begin{equation}
\label{eq:Phiscale}
\Phi_\epsilon^\delta(\z,x/m_J) \approx 
\frac{\epsilon\mu}{2\delta(\z)}\biggl[\biggl(\frac{x}{m_J}\biggr)^{\delta(\z)} -1\biggr]
\coloneqq \epsilon\,\Phi^\delta(\z,x/m_J).
\end{equation}
Note also that the function $\Phi_\epsilon^\delta$ is such that
\[
-\mu \le \Phi_\epsilon^\delta(\z,x/m_J) \le \mu
\]
for all $\delta(\z) \in [-1,1]$ and $\epsilon>0$. Clearly, the choice $\mu<1$ 
implies that, in absence of randomness, the value \reply{$x_J^\prime$} remains positive if 
$x$ is positive. It is interesting to observe that $\Phi_\epsilon^\delta$ is 
asymmetric around that value $x/m_J$ with respect to different distributions of 
$\delta$. In particular, $\Phi_\epsilon^\delta$ is increasing and convex for 
any $x/m_J \le 1$ if $\delta>0$ whereas, if $\delta<0$, the transition function 
becomes concave in an interval $[0,\bar x]$, $\bar x/m_J<1$, and then convex. 

Once the microscopic process \eqref{eq:binint} is given, the time evolution of 
the distribution of the number of social contacts $f$ follows by resorting to 
kinetic collision-like approaches, see \cite{cerc,Pareschi13}, that quantify 
the variation of the density of the contact variable in terms of an 
interaction operator, for any time $t\ge0$. The time evolution of $f$ is given by the following kinetic equation written in weak form 
\reply{
\[
\frac{d}{dt} \intRp \phi(x)f_J(\z,x,t)\, dx = \dfrac{1}{\epsilon} \int_{\mathbb R^+}\varphi(x) Q(f_J)(\z,x,t)dx
\]
where
\(
\int_{\mathbb R^+}\varphi(x) Q(f_J)(\z,x,t)dx =  \intRp B(\z,x)
\bracket{\phi(x^\prime_J)- \phi(x)}f_J(\z,x,t)\, dx,
\label{eq:boltzmann}
\)}
where we indicated with \reply{$\phi: \mathbb R^+ \to \mathbb R$, $\varphi(x)\in\mathcal{C}^\infty(\mathbb R^+)$ an observable quantity}.
 In the following, we will consider an uncertain interaction  kernel expressing 
 a multiagent system in which the frequency of changes in the number of social 
 contacts depends on $x$ through the following law 
\begin{equation}
\label{eq:Bker}
B(\z,x) = x^{-\alpha(\delta(\z))}, 
\end{equation}
being in particular  
\[
\alpha(\delta(\z))=\frac{1+\delta(\z)}{2} \geq 0, \qquad \textrm{for any} \quad\delta(\z)\in [-1, 1].
\]
We observe that the kernel \eqref{eq:Bker} mimics the fact that a priori 
information on the frequency of interaction of a system of agents is missing, 
see \cite{medaglia22_PDEA}. 

\begin{remark}\label{rem:1}
If we consider $\varphi(x) = 1$ in \eqref{eq:boltzmann} we easily get the 
conservation of the mass. Furthermore, if $\varphi(x) = x$ we have
\[
\dfrac{d}{dt}m_J(\z,t) =- \dfrac{1}{\epsilon} \int_{\mathbb R^+} 
x^{1-\alpha(\delta)}  \Phi_\epsilon^\delta(\z,x/m_J) f_J(\z,x,t)dx. 
\]
If $\epsilon \ll 1$ from \eqref{eq:Phiscale} we get
\[
\dfrac{d}{dt}m_J(\z,t) = \dfrac{\mu}{2\delta(\z)}\int_{\mathbb R^+} 
x^{1-\alpha(\delta)}\left[ \left(\dfrac{x}{m_J} \right)^{\delta(\z)}-1\right] 
f_J(\z,x,t)dx.
\]
Therefore, if we exploit the form of the interaction kernel \eqref{eq:Bker} we 
have that $m_J(\z,t)$ is a conserved quantity of \eqref{eq:boltzmann} if 
$\delta$ is a discrete random variable such that  $\delta(\z)\in\{-1,1\}$ for 
all $\z \in \mathbb R^{d_\z}$. A possible example that we will study in the following is given by $\delta(\z ) = 1-2\z$, where 
$\z \sim \textrm{Bernoulli}(p)$. 
\end{remark}
%\reply{%
%\begin{remark}
% Here, we are considering only social interactions among agents, so that the evolution of the distribution function $f$ is given by local effects. External influences, like announcements of particular policies by the authorities, being instead inherently nonlocal, can be interpreted mathematically as control terms, and they will be explored in Section^^>\ref{sect:3}.
%\end{remark}
%}
\subsection{Fokker-Planck scaling and steady states}
In general, it is difficult to compute analytically the equilibrium state of the 
 kinetic model \eqref{eq:boltzmann}. A possible approach has its 
roots in the so-called grazing collision limit of the classical Boltzmann 
equation \cite{cerc,villaniARMA}. In this direction, a deeper insight on the 
steady states can be obtained through a quasi-invariant technique 
\cite{Furioli17,Pareschi13,Tos}. The goal is to derive  a simplified 
Fokker-Planck model from the introduced Boltzmann-type dynamics. For such 
surrogate model, the study of asymptotic properties is much easier.
The idea is to scale simultaneously interactions and interaction frequency. 
Hence, the equilibrium in contact distribution is reached faster than the time scale of the epidemic dynamics. In details, assuming \reply{$\varphi \in \mathcal{C}^\infty$} we may observe that for $\epsilon \ll 1$ the difference $x^\prime_J - x$, $J \in 
\C$,  is small and we can perform a Taylor expansion 
\[
\phi(x^\prime_J) - \phi(x) = (x^\prime_J - x) \dfrac{d}{dx}\phi(x) + 
\frac12(x^\prime_J - x)^2 \dfrac{d^2}{dx^2}\phi(x) + \frac13(x^\prime_J - x)^3 
\dfrac{d^3}{dx^3}\phi(\hat x), 
\]
where $\hat x \in (\min \{x,x^\prime_J\}, \max\{x,x^\prime_J\})$. 
Plugging the above expansion in the interaction operator $Q_J(f_J)(\z,x,t)$ in 
\eqref{eq:boltzmann} and thanks to the scaling \eqref{eq:Phiscale} we get
\begin{multline} \label{eq:boltzmann_taylor}
\frac{d}{dt} \intRp \phi(x)f_J(\z,x,t)\, dx = \intRp \Phi^\delta(\z,x/m_J) 
x^{1- \alpha(\delta)} f(\z,x,t) \dfrac{d}{dx}\phi(x)\, dx\\ + \frac{\sigma^2}2 
\intRp x^{2- \alpha(\delta)}\dfrac{d^2}{dx^2} \phi(x)\, dx + R_\phi(f)(\z,x,t),
\end{multline}
 where we have defined the remainder term
\begin{multline}
R_\phi(f)(\z,x,t) = \intRp \epsilon \lrp{ \Phi^\delta(\z,x/m_J) }^2 x^{1- 
\alpha(\delta)} f(\z,x,t) \phi''(x)\, dx\\ + \dfrac{1}{\epsilon}\intRp 
\left\langle -\epsilon\,\Phi^\delta(\z,x/m_J) + \eta_\epsilon x \right\rangle^3 
x^{1- \alpha(\delta)} f(\z,x,t) \phi'''(\hat{x})\, dx    .
\end{multline}
Assuming $\left\langle|\eta_\epsilon^3|\right\rangle<+\infty$ we can prove 
that, in the limit^^>$\epsilon \to 0^+$, the remainder vanishes thanks to the 
smoothness of the function^^>$\phi$ proceeding as in \cite{cordier_05}. Hence, 
in the quasi-invariant scaling regime, we can show that the solution to model 
\eqref{eq:boltzmann_taylor} converges to 
\begin{equation}\label{eq:boltzmann_limit}
\begin{split}
\dfrac{d}{dt} \intRp \varphi(x)f_J(\z,x,t)dx =& \intRp \dfrac{\mu}{2\delta} 
x^{1-\alpha(\delta)}\lrp*{ \lrp*{\frac{x}{m_J}}^\delta-1 } 
\dfrac{d}{dx}\varphi(x) f_J(\z,x,t) dx \\
&+\dfrac{\sigma^2}{2} \intRp x^{2-\alpha(\delta)} \dfrac{d^2}{dx^2}\varphi(x) 
f_J(\z,x,t)dx
\end{split} 
\end{equation}
Integrating back by parts \eqref{eq:boltzmann_limit} we obtain the 
Fokker-Planck model 
\begin{equation}
\begin{split}
\partial_t f_J(\z,x,t) &= \reply{ \bar Q(f_J)(\z,x,t)} \\
&=  \frac{\mu}{2\delta}\partial_x \left[ 
x^{1-\alpha(\delta)}\lrp*{ \lrp*{\frac{x}{m_J}}^\delta-1 }f_J(\z,x,t)\right] + 
\frac{\sigma^2}{2}\partial^2_{x}\lrp*{x^{2-\alpha(\delta)}f_J(\z,x,t) } 
\end{split}
\label{eq:FP1}
\end{equation}
complemented by no-flux boundary conditions
\(
\begin{split}
\left.\frac{\mu}{2\delta} x^{1-\alpha(\delta)}\biggl[ 
\biggl(\frac{x}{m_J}\biggr)^\delta-1 \biggr]f_J(\z,x,t) + 
\frac{\sigma^2}{2}\partial_x (x^{2-\alpha(\delta)}f_J(\z,x,t))\right|_{x=0} = 0 
\\
x^{2-\alpha(\delta)}f_J(\z,x,t) \Bigg|_{x=0}=0.
\label{eq:boundcond}
\end{split}\)

We can observe now that the steady state of equation^^>\eqref{eq:FP1} depends 
on the parametric uncertainty of the model and is given by 
\( \label{eq:finf}
f^{\infty}_J(\z,x) = C_{\delta,\sigma^2,\mu,m_J}(\z) x^{\frac{\mu}{\sigma^2 
\delta(\z)} - 2 + \alpha(\delta(\z))} \exp\left\{ 
-\dfrac{\mu}{\sigma^2\delta(\z)^2}\lrp*{\dfrac{x}{m_J} }^{\delta}\right\},
\)
corresponding to generalized Gamma density with $C_{\delta,\sigma^2,\mu,m_J}>0$ normalization constant. In particular, we can observe that in the limit 
$\delta \to 0$ we get
\(
\label{eq:finf_delta0}
\finf[0] (x) = C^{(0)}_{\sigma^2,\mu,m_J} x^{3/2} \exp\left\{ 
-\dfrac{\mu}{2\sigma^2} \log^2\lrp*{\frac{x}{m_J}} \right\},
\)
 where again $C^{(0)}_{\sigma^2,\mu,m_J}>0$ is a normalization constant. Whereas, if $\delta(\z)\equiv1$ from \eqref{eq:finf} we get
\begin{equation}
f_{J,1}^\infty(x) = \frac{\lambda^\lambda}{(m_J)^\lambda \Gamma(\lambda)} 
\frac1{x^{1-\lambda}} \exp\set[\bigg]{-\frac{\lambda x}{ m_J}}, \qquad \lambda 
= \mu/\sigma^2,
\label{eq:finf_deltap}
\end{equation}
which is a Gamma distribution. On the other hand, if $\delta(\z) \equiv -1$ 
from \eqref{eq:finf} we get
\begin{equation}
f_{J,-1}^\infty(x) = \frac{(\lambda m_J)^{\lambda+1}}{\Gamma(\lambda +1)} 
\frac1{x^{2+\lambda}} \exp\set[\bigg]{-\frac{\lambda m_J}x},\qquad \lambda = 
\mu/\sigma^2, 
\label{eq:finf_deltam}
\end{equation}
corresponding to an inverse Gamma distribution.

More generally, we may observe that the distribution \eqref{eq:finf} exhibits 
different behaviors depending on the uncertain parameter $\delta(\z)$. In 
particular, for each realization of the random variable $\delta(\z)$ such that  
$\delta<0$ the equilibrium density exhibits fat tails with a polynomial 
decrease for $x\to+\infty$. On the other hand,  for each realization of the 
random variable $\delta(\z)$ such that  $\delta\ge0$, the equilibrium density is 
characterized by slim tails. From the modelling point of view, a  fat-tailed 
distribution of contacts defines a society where a non-negligible portion of 
agents has a high number of contacts. Therefore, the fact that the parameter 
$\delta$ characterizing the tails of the distributions is uncertain means that 
we take into account the lack of knowledge on the behaviour of the society.

\reply{
\begin{remark}
In the present context we have neglected effects related to opinion-type dynamics that may influence the process of contact formation. Recent experimental results have shown that social norm changes are often triggered by opinion alignment phenomena. In particular, the perceived adherence of individuals’ social network has a strong impact on the effective support of protective behaviour. Therefore, the individual responses to threat are a core question to set up effective measures in the presence of cases escalation. 
\end{remark}}
%%%%%%%%%%%%%

\subsection{Uniqueness of the solution} 
\label{sec:uniqueness}
In this subsection, we prove some properties of the solutions of the Cauchy 
problem^^>\eqref{eq:seisc1} for any $\z \in \R^{d_{\z}}$. 
Let us first concentrate on the Cauchy problem defined by the 
Fokker-Planck-type problem \eqref{eq:FP1} with given initial condition 
$f_J(\z,x,0)\ge 0$. 
We may apply the arguments of \cite{Escobedo91, Carrillo08} to show the 
positivity of the solution of \eqref{eq:FP1}.

\begin{proposition}\label{thm:unique}
Let $f_J$ be a solution of the Cauchy problem 
\(\label{eq:CP1} 
\partial_t f_J(\z,x,t) = \reply{\bar{Q}(f_J)(\z,x,t)}, \quad J\in \set{S,E,I,R},
\)
{\replydec
where 
\[
\begin{aligned}
    \bar{Q}(f_J)(\z,x) &= \partial_x \left[A_J(\z,x)f_J(\z,x,t) + \partial_x^2(B_J(\z,x)f(\z,x,t))\right],
\end{aligned}
\]
and
\[
A_J(\z,x)    = 
\frac{\mu}{2\delta}x^{1-\alpha(\delta)}\biggl[\biggl(\frac{x}{m_J}\biggl)^\delta - 1\biggr],\quad
    B_J(\z,x)    = \frac{\sigma^2}{2} x^{2-\alpha(\delta)},
\]}
with initial condition  $f_J^0=f_J(\z,x,0)$. If $f_J^0 \in L^1(\Rp)$ for all 
$\z \in \mathbb R^{d_\z}$ then $\int_{\mathbb R^+}|f_J|dx$ is non-increasing for 
all $\z \in \mathbb R^{d_\z}$ and $t \ge 0$. 
\end{proposition}
\begin{proof}
Let us consider a positive constant $\epsilon>0$. We introduce an increasing 
approximation of the $\sign$ function 
$\signep(f_J)(\z,x)$, $\z \in \mathbb R^{d_\z}$, $x \in \Rp$, with $J \in 
\set{S, E, I, R}$, and define the
approximation $\abs{f_J}_\epsilon(\z,x)$ of $\abs{f_J}(\z,x)$ by the primitive 
of 
$\signep(f_J)(\z,x)$. Hence, we write the Fokker-Planck equation in weak form 
where we consider the smooth function $\phi = \signep(f_J)(\z,x)$ to obtain
\(
\begin{split}
\frac{d}{dt} \intRp \abs{f_J}_\epsilon(\z,x) \, dx &=
\intRp \signep (f_J)(\z,x) \partial_x [A_J(\z,x)f_J(\z,x)]\, dx \\
&\quad+\intRp \signep (f_J)(\z,x) \partial^2_x [B_J(\z,x)f_J(\z,x)]\, dx\\
&= -\intRp [\signep'(f_J) (\z,x) \partial_x f_J(\z,x)]A_J(\z,x)f_J(\z,x)\, dx\\
&\phantom{{}=} -\intRp [\signep'(f_J) (\z,x) \partial_x 
f_J(\z,x)]\partial_x[B_J(\z,x)f_J(\z,x)]\, dx,
\end{split}
\)
where we recall that $\delta = \delta(\z)$. Since the boundary terms $\signep 
(f_J)(\z,x) A_J(\z,x)f_J(\z,x)\mid_{x= 0}^{+ \infty} $  
and $\signep (f_J)(\z,x) \partial_x [B_J(\z,x)f_J(\z,x)]\mid_{x= 0}^{+ \infty} 
$ vanish in view of the boundary conditions, we have
\(
\begin{split}
\frac{d}{dt} \intRp \abs{f_J}_\epsilon(\z,x) \, dx = &-
\intRp \sed (f_J)(\z,x)f_J(\z,x) \partial_x f_J(\z,x) [A_J(\z,x) + \partial_x 
B_J(\z,x)]\, 
dx \\
&-\intRp \sed(f_J) (\z,x) [\partial_x f_J(\z,x)]^2 B_J(\z,x)\, dx.
\end{split}
\)
Next we observe that for all $\z \in \mathbb R^{d_\z}$
\(
\sed(f_J)(\z,x) f_J(\z,x)\partial_x f_J(\z,x) = \partial_x 
[f_J(\z,x)\signep(f_J)(\z,x) - 
\abs{f_J}_\epsilon(\z,x)].
\)
Therefore we have
\(
\begin{split}
\frac{d}{dt} \intRp \abs{f_J}_\epsilon(\z,x) \, dx = &-
\intRp \partial_x[f_J(\z,x) \signep(f_J)(\z,x) - \abs{f_J}_\epsilon(\z,x)]
(A_J(\z,x) + \partial_x B_J(\z,x))\, dx\\
&- \intRp \sed(f_J) (\z,x) [\partial_x f_J(\z,x)]^2 B_J(\z,x)\, dx.
\end{split}
\)
Hence, integrating by parts the first term of the above equation we obtain 
that in the limit $\epsilon\to 0^+$ such term vanishes and
\(
\frac{d}{dt} \norma{f_J(\z,x)}_{L^1} \le 0,
\)
for all $\z \in \mathbb R^{d_\z}$ and for all $J \in \I$. Therefore, for all $t 
\ge 0$, if we take another solution  $g_J(x,t)$ of the Cauchy 
problem^^>\eqref{eq:seisc1} with initial condition $g^0_J=g_J(\z,x,0)$, we have
\(
\norma{f_J(\z,x,t) - g_J(\z,x,t)}_{L^1} \le \norma{f_J(\z,x,0) - 
g_J(\z,x,0)}_{L^1}.\qedhere
\)
\end{proof}
\begin{cor}\label{cor:FPpos}
Let $f_J$ be a solution of the Cauchy problem^^>\eqref{eq:CP1} with initial 
condition $f_J(\z,x,0) \in L^1(\Rp)$. If $f_J(\z,x,0) \ge 0 $ for any $\z \in 
\mathbb R^{d_\z}$ and $x\in\Rp$ a.e., then 
$f_J(\z,x,t) \ge 0$ a.e., for all $t \ge 0$ and $\z \in \mathbb R^{d_\z}$.
\end{cor}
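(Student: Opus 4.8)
The plan is to deduce the corollary directly from Proposition \ref{thm:unique} by pairing the non-increase of the $L^1$ norm with the conservation of mass. I write $f_J^- = \max\set{-f_J, 0}$ for the negative part, so that $\abs{f_J} = f_J + 2 f_J^-$ pointwise and the desired conclusion $f_J(\z,x,t) \ge 0$ a.e.\ is equivalent to $\intRp f_J^-(\z,x,t)\, dx = 0$. The first ingredient is mass conservation: the Fokker--Planck operator in \eqref{eq:FP1} is in divergence form, $Q(f_J) = \partial_x\lrp*{A_J f_J + \partial_x(B_J f_J)}$, so integrating \eqref{eq:CP1} against $\varphi \equiv 1$ and using the no-flux boundary conditions \eqref{eq:boundcond} gives, exactly as in Remark \ref{rem:1}, that $\frac{d}{dt}\intRp f_J(\z,x,t)\, dx = 0$ for every $\z$.

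Next I would run a squeezing argument. Set $M(\z) = \intRp f_J(\z,x,0)\, dx$; by the assumed nonnegativity of the initial datum, $M(\z) = \norma{f_J(\z,\cdot,0)}_{L^1}$. On the one hand, Proposition \ref{thm:unique} yields $\norma{f_J(\z,\cdot,t)}_{L^1} \le \norma{f_J(\z,\cdot,0)}_{L^1} = M(\z)$ for all $t \ge 0$. On the other hand, since $\abs{f_J} \ge f_J$ pointwise, mass conservation forces $\norma{f_J(\z,\cdot,t)}_{L^1} \ge \intRp f_J(\z,x,t)\, dx = M(\z)$. The two inequalities pin down $\norma{f_J(\z,\cdot,t)}_{L^1} = M(\z) = \intRp f_J(\z,x,t)\, dx$, whence $\intRp \lrp*{\abs{f_J} - f_J}\, dx = 2\intRp f_J^-\, dx = 0$. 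Therefore $f_J^-(\z,x,t) = 0$ for a.e.\ $x$ and every $\z$, which is precisely $f_J(\z,x,t) \ge 0$.

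Equivalently, and more in the spirit of the regularization already carried out for Proposition \ref{thm:unique} following \cite{Escobedo91, Carrillo08}, one can track the negative part directly: repeat the weak formulation with $\varphi$ replaced by a smooth increasing approximation of $-\mathbf 1_{\set{f_J < 0}}$ (a one-sided analogue of $\signep$), integrate by parts using the boundary conditions, and show $\frac{d}{dt}\intRp f_J^-\, dx \le 0$ in the limit $\epsilon \to 0^+$; since this quantity is nonnegative and vanishes at $t = 0$, it must remain zero. Both routes rest on the same two facts, so I would present the squeezing argument as the primary one because it invokes Proposition \ref{thm:unique} verbatim.

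The computations are routine and I expect the main (indeed the only) obstacle to be the same boundary and decay control already needed for Proposition \ref{thm:unique}: ensuring that the flux $A_J f_J + \partial_x(B_J f_J)$ and the term $B_J f_J$ genuinely vanish at $x=0$ and as $x \to +\infty$, despite the degeneracy of the coefficients $x^{1-\alpha(\delta)}$ and $x^{2-\alpha(\delta)}$ at the origin, so that both the flux-form mass identity and the $L^1$ inequality hold with no hidden boundary contributions, together with the passage to the limit $\epsilon \to 0^+$ in the regularized sign function. Once these are granted, the positivity statement is immediate from the squeeze.
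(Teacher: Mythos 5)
Your proposal is correct, and it is worth noting that the paper itself does not actually spell out an argument here: its entire proof of Corollary~\ref{cor:FPpos} is the single sentence ``The result follows from a similar proof presented in \cite{Carrillo08}'', i.e., the intended argument is precisely the regularized one-sided sign function / negative-part tracking that you sketch as your secondary route. Your primary route --- the squeeze combining exact mass conservation (from the divergence form of $Q$ and the no-flux conditions \eqref{eq:boundcond}) with the $L^1$ non-increase of Proposition~\ref{thm:unique} --- is a genuinely different and arguably cleaner argument: it uses the Proposition as a black box, requires no additional regularization, and reduces positivity to the pointwise inequality $\abs{f_J} \ge f_J$ together with equality of the two integrals, so that the negative part is squeezed to zero. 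What it costs is the extra input that mass is \emph{exactly} conserved, which requires the flux $A_J f_J + \partial_x(B_J f_J)$ to vanish as $x \to +\infty$ as well as at $x = 0$ --- a decay condition not written explicitly in \eqref{eq:boundcond} but implicitly assumed throughout the paper, and equally needed in the proof of Proposition~\ref{thm:unique} itself, where boundary terms at both ends of $\Rp$ are discarded; you correctly flag this as the only real technical obstruction. In short: your secondary route reproduces the paper's (cited) approach, while your primary route is a more elementary alternative resting on the same boundary/decay hypotheses, and your write-up is in fact more self-contained than the paper's.
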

\begin{proof}
The result follows from a similar proof presented in \cite{Carrillo08}. 
\end{proof}
Now, we concentrate on the epidemic dynamics proving the positivity of the 
solution of the SEIR-type compartmental system in absence of the collision 
operators $Q_J$, $J\in \C$, see \cite{El11}. 

\begin{proposition}\label{lem:seirpos}
Let $f_J(\z,x,t)$, $x \in \mathbb R^+$, $\z \in \mathbb R^{d_\z}$, $J \in \C$ be 
a solution of the Cauchy problem
\begin{equation}
\left\lbrace
\begin{aligned}
\pd{f_S(\z,x,t)}{t}  &= -K(f_S, f_I)(\z,x,t) , \\ 
\pd{f_E(\z,x,t)}{t}  &= K(f_S, f_I)(\z,x,t) - \zeta(x) f_E(\z,x,t),\\
\pd{f_I(\z,x,t)}{t}  &= \zeta(x) f_E(\z,x,t) - \gamma(x)f_I(\z,x,t),\\
\pd{f_R(\z,x,t)}{t}  &= \gamma(x) f_I(\z,x,t),
\end{aligned}
\right.
\label{eq:sys_pos}
\end{equation} 
with the initial data^^>$f_J(\z,x,0)\geq 0$ for all $x\geq 0$ and $\z \in 
\mathbb R^{d_\z}$, and $K(f_S,f_I)$ defined as 
\[
K(f_S,f_I)(\z,x,t)  =  f_S(\z,x,t) \int_{\mathbb R^+} 
\kappa(x,x_*)f_I(\z,x_*,t)dx_*,
\] 
with $\kappa\ge 0$ for all $x,x_*\in \mathbb R^+\times \mathbb R^+$. Then 
$f_J(\z,x,t) \ge 0$ for all $\z \in \mathbb R^{d_\z}$, $x\in \mathbb{R}^+$ and 
$t \ge 0$.
\end{proposition}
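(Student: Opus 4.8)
The plan is to read \eqref{eq:sys_pos} as a family of ordinary differential equations in time, parametrised by $\z\in\R^{d_\z}$ and $x\in\Rp$, the only coupling across the $x$-variable being the nonlocal coefficient
\[
G(\z,x,t)\coloneqq\intRp \kappa(x,x_*)\,f_I(\z,x_*,t)\,dx_*,\qquad\text{so that}\quad K(f_S,f_I)=f_S\,G.
\]
Since $\kappa\ge0$, the map $f_I\mapsto G$ sends nonnegative functions to nonnegative functions, a fact I will use repeatedly. First I would dispose of the two ``easy'' compartments. At each fixed $(\z,x)$ the equation $\pd{f_S}{t}=-G\,f_S$ is a scalar linear ODE, solved by $f_S(\z,x,t)=f_S(\z,x,0)\exp\{-\int_0^t G(\z,x,s)\,ds\}$; as an exponential is strictly positive, this yields $f_S\ge0$ \emph{unconditionally}, regardless of the sign of $G$. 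Likewise $\pd{f_R}{t}=\gamma(x)f_I$ integrates to $f_R(\z,x,t)=f_R(\z,x,0)+\int_0^t\gamma(x)f_I(\z,x,s)\,ds$, so $f_R\ge0$ follows at once \emph{once} $f_I\ge0$ is known. The whole difficulty is therefore concentrated in the pair $(f_E,f_I)$.

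For $(f_E,f_I)$ I would pass to the mild (Duhamel) formulation via the integrating factors $e^{\zeta(x)t}$ and $e^{\gamma(x)t}$,
\begin{align*}
f_E(\z,x,t) &= e^{-\zeta(x)t}f_E(\z,x,0)+\int_0^t e^{-\zeta(x)(t-s)}\,f_S(\z,x,s)\,G(\z,x,s)\,ds,\\
f_I(\z,x,t) &= e^{-\gamma(x)t}f_I(\z,x,0)+\int_0^t e^{-\gamma(x)(t-s)}\,\zeta(x)\,f_E(\z,x,s)\,ds,
\end{align*}
which exhibits the quasi-positive (Metzler) structure of the coupling: the source feeding $f_E$ is $f_S\,G\ge0$ as soon as $f_I\ge0$ (recall $f_S\ge0$ is already known), while the source feeding $f_I$ is $\zeta f_E\ge0$ as soon as $f_E\ge0$. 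The only obstruction is the circular implication ``$f_E\ge0$ needs $f_I\ge0$ and $f_I\ge0$ needs $f_E\ge0$''.

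To break the circularity I would run a maximal-time argument. Set
\[
t^*\coloneqq\sup\set[\big]{\,t\ge0:\ f_E(\z,\cdot,s)\ge0\ \text{and}\ f_I(\z,\cdot,s)\ge0\ \text{for a.e.\ }x,\ \text{all }s\in[0,t]\,}.
\]
Nonnegativity of the data and continuity in time give $t^*>0$; on $[0,t^*]$ one has $f_I\ge0$, hence $G\ge0$, so the first identity yields $f_E(\z,x,t)\ge e^{-\zeta(x)t}f_E(\z,x,0)\ge0$, and inserting this into the second gives $f_I(\z,x,t)\ge e^{-\gamma(x)t}f_I(\z,x,0)\ge0$. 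Both bounds hold up to and including $t^*$ and, by continuity, persist on a slightly larger interval, contradicting maximality unless $t^*=+\infty$. Hence $f_E,f_I\ge0$ for all $t\ge0$, and $f_R\ge0$ follows from the formula above, for every $\z\in\R^{d_\z}$.

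The step I expect to be the main obstacle is making this continuity argument fully rigorous in the presence of the nonlocal $G$: one must guarantee $f_I(\z,\cdot,t)\in L^1(\Rp)$ so that $G$ is finite and well defined, and that $t\mapsto f_I(\z,\cdot,t)$ is continuous in a topology strong enough to propagate ``$f_I\ge0$ on $[0,t^*]$'' slightly beyond $t^*$. A cleaner alternative that sidesteps this bookkeeping is to invoke the invariance of the nonnegative cone for quasi-monotone systems (cf.\ \cite{El11}): the right-hand side of \eqref{eq:sys_pos} satisfies the Nagumo-type tangency condition, since whenever one component vanishes while the others stay nonnegative its time derivative is nonnegative---$\pd{f_S}{t}=0$, $\pd{f_E}{t}=f_S\,G\ge0$, $\pd{f_I}{t}=\zeta f_E\ge0$, $\pd{f_R}{t}=\gamma f_I\ge0$---which is exactly the condition forcing forward invariance of $\{f_J\ge0\}$.
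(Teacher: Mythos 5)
Your decomposition is genuinely different from the paper's, and in one respect cleaner. The paper runs a single nested first-crossing contradiction: it supposes $f_S$ first touches zero at some $(x_0,t_0)$ with $\partial_t f_S(\z,x_0,t_0)<0$, then by two inner contradictions of the same type (using, as you do, the Duhamel formula for $f_I$) establishes $f_E,f_I,f_R\ge 0$ on $[0,t_0)$, and finally contradicts the assumed negative derivative. You instead solve the $S$-equation exactly, $f_S=f_S(\z,x,0)\exp\{-\int_0^t G\,ds\}\ge 0$ \emph{unconditionally}, which removes $f_S$ from the bootstrap altogether and reduces everything to the cooperative pair $(f_E,f_I)$; this is a real simplification (and it sidesteps the slip in the paper's closing line, where $\partial_t f_S(\z,x_0,t_0)=\gamma f_I$ is written in place of the correct $\partial_t f_S=-K(f_S,f_I)=0$, the contradiction being $0\ge 0$ versus $<0$).

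However, the maximal-time argument as you state it does not close, and this is a genuine gap rather than bookkeeping. First, $t^*>0$ does not follow from "nonnegativity of the data and continuity": if $f_E(\z,x,0)=0$ at some $x$ with negative time derivative, the solution leaves the cone immediately; one must already use the structure of the equations to get started. Second, and more seriously, the lower bounds $f_E\ge e^{-\zeta(x)t}f_E(\z,x,0)$ and $f_I\ge e^{-\gamma(x)t}f_I(\z,x,0)$ are derived \emph{under the hypothesis} $G\ge 0$, which you know only on $[0,t^*]$; they therefore reprove nonnegativity where you already have it and say nothing for $t>t^*$, where the source $f_S G$ may be negative. Since nonnegativity is a closed, not open, condition, "persist by continuity" is precisely the step that fails; repairing it requires strict inequalities (e.g.\ an $\epsilon$-perturbed system with data $f_J+\epsilon$ and a limit $\epsilon\to 0^+$) or a Gronwall estimate on the negative parts, the latter needing moment bounds because $\kappa(x,x_*)=\beta x x_*$ is unbounded. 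Your fallback via quasi-positivity of the vector field is the right fix and is, in substance, exactly what the paper's nested contradiction implements by hand --- the tangency inequalities you list are the paper's key computations --- but note that invoking cone invariance here is not a pointwise ODE statement, since $G$ couples all of $x\in\Rp$; whichever route one takes, the argument remains at the same formal level as the paper's, which shares the implicit assumption that loss of positivity occurs at a nice first-crossing point.
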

\begin{proof}
We proceed by contradiction. Let us suppose that there exists a time instant 
$t_0>0$ such that there exists a point $x_0>0$ such that
\[
f_S(\z,x_0,t_0)=0, \quad \partial_t f_S(\z,x_0,t_0) < 0, \quad f_S(\z,x,t)\geq 
0 \quad \textrm{for all}\; t\in[0,t_0), 
\]
and for all $x \in \mathbb R^+$, $\z \in \mathbb R^{d_\z}$.
Then, $f_E(\z,x,t)\geq 0$ for all $t\in[0,t_0)$ and $x\geq 0$. If not, there 
must be a time $t_1\in[0,t_0)$ such that there exists a value $x_1>0$ for which 
\[
f_E(\z,x_1,t_1)=0, \quad \partial_t f_E(\z,x_1,t_1) < 0, \quad f_E(\z,x,t)\geq 
0 \quad \textrm{for all}\; t\in[0,t_1),
\]
and for all $x \in \mathbb R^+$, $\z \in \mathbb R^{d_\z}$. Hence, integrating 
the third equation of \eqref{eq:sys_pos} we get
\[
f_I(\z,x,t)=f_I(\z,x,0)\,e^{-\gamma t} + \zeta \int_0^t f_E(\z,x,s)\, 
e^{-\gamma(t-s)} ds \geq 0 \quad \textrm{for all}\; t\in[0,t_1).
\]
Then we have
\[
\partial_t f_E(\z,x_1,t_1)=\beta x_1 f_S(\z,x_1,t_1) \intRp y f_I(\z,y,t_1) dy 
\geq 0
\]
that is not coherent with the hypothesis. As a consequence, it holds 
$f_E(\z,x,t)\geq 0$ for all $t\in[0,t_0)$, all $x\geq0$ and all 
$\z\in\mathbb{R}^{d_z}$. Furthermore, we also have that $f_I(\z,x,t)\geq 0$ for 
all $t\in[0,t_0)$ and $x\geq0$. If not, there should be a time $t_2\in[0,t_0)$ 
such that there exists a position $x_2>0$ for which 
\[
f_I(\z,x_2,t_2)=0, \quad \partial_t f_I(\z,x_2,t_2) < 0, \quad f_I(x,t)\geq 0 
\quad \textrm{for all}\; t\in[0,t_2),
\]
for all $x \ge 0$ and $\z \in \mathbb R^{d_\z}$. Proceeding as before we get
\[
\partial_t f_I(\z,x_2,t_2) = \zeta f_E(\z,x_2, t_2) \geq 0
\]
that is not coherent with the hypothesis. It follows that $f_I(\z,x,t)\geq 0$ 
for all $t\in[0,t_0)$ and $x\geq0$.
In view of the results on $f_E$ and $f_I$, we get $f_R(x,t)\geq 0$ for all 
$t\in[0,t_0)$ and $x\geq0$.

To conclude, we observe that
\[
\partial_t f_S(\z,x_0,t_0) =  \gamma f_I(\z,x_0, t_0) \geq 0 
\]
which is the desired contradiction. Therefore, $f_S(\z,x,t)\geq 0$ for all 
$t\in[0,t_0)$ and $x\geq0$.
\end{proof}

Once proved the positivity of the contact formation model and of the 
epidemiological dynamics, we can conclude that the solution of the general 
Cauchy problem^^>\eqref{eq:seisc1} with a non-negative initial data $f_J$ is 
positive a.e.\ for all $t\ge 0$ and $\z \in \mathbb R^{d_\z}$. 

In the following we concentrate on the uniqueness of the solution of the 
introduced model. 
\begin{theorem}[Uniqueness of the solution]
Let $f_J,\,g_J$, with $J\in\{S,E,I,R\}$, be two solutions of the Cauchy 
problem
{\replydec
\[
\left\lbrace
\begin{aligned}
\pd{f_S(\z,x,t)}{t}  &= -K(f_S, f_I)(\z,x,t) 
                     + \frac{1}{\tau} Q_S(f_S)(\z,x,t),\\ 
\pd{f_E(\z,x,t)}{t}  &= K(f_S, f_I)(\z,x,t) - \zeta f_E(\z,x,t)
                     + \frac{1}{\tau} Q_E(f_E)(\z,x,t),\\
\pd{f_I(\z,x,t)}{t}  &= \zeta f_E(\z,x,t) - \gamma f_I(\z,x,t)
                     +\frac{1}{\tau} Q_I(f_I)(\z,x,t),\\
\pd{f_R(\z,x,t)}{t}  &= \gamma f_I(\z,x,t)
                     + \frac1\tau Q_R(f_R)(\z,x,t),
\end{aligned}
\right.
\]
where we take $K(f_S, f_I)$ as in Proposition^^>\ref{lem:seirpos} and
}
constant, 
positive epidemiological parameters $\beta$, $\zeta$, $\gamma>0$. Furthermore, we 
assume \reply{the existence of a positive constant $\bar{\kappa}>0$ such that} $\|\kappa(x,x_*)\|_{L^{\infty}}\le \bar{\kappa}$.  If $f_J(\z,x,0)\in 
L^1$ and $g_J(\z,x,0)\in L^1$, then there exists $C^{\max}>0$ such that 
\[
\sum_{J \in \C} \norma{f_J(\z,x,t) - g_J(\z,x,t)}_{L^1(\mathbb R^+)} \le 
e^{C^{\max}t} \sum_{J \in \C}\norma{f_J(\z,x,0) - g_J(\z,x,0)}_{L^1(\mathbb 
R^+)}.
\]
\end{theorem}
\begin{proof}
In the following, we drop the dependence on $x\in \mathbb R^+$, $t\ge 0$ and 
$\z \in \mathbb R^{d_\z}$ for brevity.
We first observe that the difference between two solutions is itself solution 
of the system
\[
\left\lbrace
\begin{aligned}
\pd{(f_S - g_S)}{t}  &= -[K(f_S, f_I) - K(g_S, g_I)] 
                     + \frac{1}{\tau} Q(f_S - g_S),\\ 
\pd{(f_E - g_E)}{t}  &= [K(f_S, f_I) - K(g_S, g_I)] - \zeta[f_E - g_E]
                     + \frac{1}{\tau} Q(f_E - g_E),\\
\pd{(f_I - g_I)}{t}  &= \zeta[f_E - g_E] - \gamma[f_I - g_I]
                     +\frac{1}{\tau} Q(f_I - g_I),\\
\pd{(f_R - g_R)}{t}  &= \gamma[f_I - g_I]
                     + \frac1\tau Q(f_R - g_R).
\end{aligned}
\right.
\]
From the proof of \reply{Proposition}^^>\ref{thm:unique}  we get
\[
\left\lbrace
\begin{aligned}
\frac{d}{dt}\intRp\abs{f_S - g_S}\, dx  &\le -\intRp\abs{K(f_S, f_I) - K(g_S, 
g_I)}\, dx,\\ 
\frac{d}{dt}\intRp\abs{f_E - g_E}\, dx  &\le \intRp\abs{K(f_S, f_I) - K(g_S, 
g_I)}\, dx
                          + \zeta\norma{f_E - g_E}_{L^1(\mathbb R^+)},\\
\frac{d}{dt}\intRp\abs{f_I - g_I}\, dx  &\le \zeta\norma{f_E - 
g_E}_{L^1(\mathbb R^+)}
                          + \gamma\norma{f_I - g_I}_{L^1(\mathbb R^+)},\\
\frac{d}{dt}\intRp\abs{f_R - g_R}\, dx  &\le \gamma\norma{f_I - 
g_I}_{L^1(\mathbb R^+)}.\\
\end{aligned}
\right.
\]
Now, we can rewrite $K(f_S, f_I) - K(g_S, g_I)$ as follows
\[
K(f_S, f_I) - K(g_S, g_I) = (f_S - g_S) \intRp \kappa(x,x_*)f_I(x_*)\, dx_*
                            + g_S \intRp \kappa(x,x_*) (f_I - g_I)(x_*) \, dx_*,
\]
from which we have
\[
\begin{split}
\intRp \abs{K(f_S, f_I) - K(g_S, g_I)}\, dx
& \le \int_{\mathbb R^+} \left|(f_S-g_S) \int \bar \kappa f_Idx_* + g_S 
\int_{\mathbb R^+}\bar \kappa (f_I-g_I)dx_*\right| dx \\
&\le c\left( \| f_S-g_S\|_{L^1(\mathbb R^+)} + \| f_I-g_I\|_{L^1(\mathbb R^+)} 
, \right)
\end{split}
\]
with $c>0$. This allows us to write
\(
\left\lbrace
\begin{aligned}
\frac{d}{dt}\norma{f_S - g_S}_{L^1}(t)  &\le c\left( \| f_S-g_S\|_{L^1(\mathbb 
R^+)} + \| f_I-g_I\|_{L^1(\mathbb R^+)}\right) , \\ 
\frac{d}{dt}\norma{f_E - g_E}_{L^1}(t)  &\le c\left( \| f_S-g_S\|_{L^1(\mathbb 
R^+)} + \| f_I-g_I\|_{L^1(\mathbb R^+)}\right) 
                                        + \zeta\norma{f_E - g_E}_{L^1},\\
\frac{d}{dt}\norma{f_I - g_I}_{L^1}(t)  &\le \zeta\norma{f_E - g_E}_{L^1}
                                        + \gamma\norma{f_I - g_I}_{L^1},\\
\frac{d}{dt}\norma{f_R - g_R}_{L^1}(t)  &\le \gamma\norma{f_I - g_I}_{L^1}.
\end{aligned}
\right.
\)
%If we then define the constant $C^{\max} > 0$ as
%\[
%C^{\max} = \max\set{2\gamma, 2\zeta}
%\]
Then there exists $C^{\max}>0$ such that 
\(
\frac{d}{dt}\sum_{J\in \C} \norma{f_J - g_J}_{L^1(\mathbb R^+)} \le C^{\max}
                           \sum_{J \in \C} \norma{f_J - g_J}_{L^1(\mathbb R^+)},
\)
which, by Gronwall's inequality, gives the claim.
\end{proof}

\section{Selective control of the kinetic epidemic model}\label{sect:3}
In Section \ref{sect:2} we introduced and discussed a variety of kinetic 
models to describe the contact formation dynamics in a society. The main brick 
of the construction relies on the choice of the transition functions 
\eqref{eq:tr_func} embedding uncertainties in the elementary updates 
\eqref{eq:binint}, and characterizing the growth in terms of an uncertain 
parameter $\delta = \delta(\z)$. In particular, it was shown that, for negative 
values of the parameter $\delta$, the resulting equilibrium contact 
distribution is given by a distribution with polynomial tails  
\eqref{eq:finf_deltam}. On the other hand, slim tailed distributions can be 
obtained for positive values of $\delta$, see \eqref{eq:finf_delta0}-\eqref{eq:finf_deltap}.

In this section, we will investigate the possibility to control the dynamics of 
contact formation mimicking the action of non-pharmaceutical interventions 
which should then mitigate the risk factors linked to the transmission of the 
infection. The new kinetic description allows to enlighten the effects of 
interventions of the policy maker by acting on the contact distribution of the 
society of which partial information is available. It is worth to mention that the control of multiagent systems has been recently 
investigated as a natural follow-up issue in the description and modeling of 
their self-organization ability, see e.g. 
\cite{ACFK,albi18,Albi14,FPR14,Medaglia22} and the references therein.

%\reply{
%In the following, we concentrate on interventions of a policy maker  prescribing a given number of social contacts. External influences, like the effects of opinion leaders 
%}

\subsection{The controlled model}\label{subsect:controlled}
To mimic the action of non-pharmaceutical interventions, we add to the 
microscopic evolution of the social contacts a second update dynamics, 
implementing an additive control term^^>$u$, to limit selectively the social 
activities, see  \cite{albi18,Preziosi21}. Hence, the contact formation is 
influenced by the uncertain dynamics defined in \eqref{eq:binint} and, in 
parallel, by the elementary interaction under control
\begin{equation} \label{eq:interaction_controlled}
x''_J= x + \sqrt{\epsilon\tau} S(x) u, 
\end{equation}
where \reply{$x_J''-x$ is variation of social contacts in the presence of the control $u$ and} $S(x)\geq0$ is a selective function which depends on the number of 
contacts. 

The small parameters $\tau$ and $\epsilon$ represent, respectively, 
the speed at which the contact dynamics equilibrium is reached and the limit from the Boltzmann dynamics to the Fokker-Planck one \cite{Dimarco22}. \reply{Two different speed values need to be considered since it is reasonable to assume that such interventions share the time scale with the epidemics, wich is much faster than the contact formation process.} We 
remark also that this second interaction scheme is independent by the uncertain parameter $\z\in \mathbb R^{d_\z}$ and it is linked to a new additive Boltzmann collisional operator which scales with the epidemic dynamics.

The optimal control $u^*$ is such that
\begin{equation}
u^* = \arg \min_{u\in\mathcal{U}} \mathcal{J}_J(x^{\prime\prime}_J,u),
\label{eq:mincontrol}
\end{equation}
under the constraint \eqref{eq:interaction_controlled}, where 
$\mathcal{U}$ is the set of the admissible controls, \reply{i.e., the set of controls such that $x_J''\geq 0$}. We define the cost 
$\mathcal{J}_J$ as follows
\(
\mathcal{J}_J(u,x'') =  (x''_J - x_{T,J})^2 + \kappa |u|^p, \qquad 
J\in\{S,E,I,R\},
\label{eq:Jfunctional}
\)
being $\kappa>0$ a penalization coefficient and $x_{T,J}>0$ the desired target 
number of contacts to reach in each compartment. We remark that the introduced penalization can depend by the compartment of the agent and that the control 
obtained from \eqref{eq:mincontrol} subject to 
\eqref{eq:interaction_controlled} is independent on $\z \in \mathbb R^{d_\z}$. 
Typical choices for the cost function $\mathcal{J}_J$ are obtained for $p=1,2$ 
and a clear analytical understanding is generally difficult for general convex 
functions and suitable numerical method should be developed. 

Let us consider the simple case $p = 2$. Hence, the minimization 
of^^>\eqref{eq:mincontrol} can be achieved via a Lagrangian multiplier 
approach. We define the Lagrangian
\[
\mathcal L (u,x''_J) = \mathcal{J}_J + \theta\left(x''_J - x - 
\sqrt{\epsilon\tau} S(x) 
u \right),
\]
where $\theta \in \R$ is the multiplier associated to the 
constraint^^>\eqref{eq:interaction_controlled}. Then we compute
\[
\left\lbrace
\begin{aligned}
\frac{\partial\mathcal L (u,x''_J)}{\partial u} = 2\kappa u - \theta 
\sqrt{\epsilon\tau} S(x) &= 0\\
\frac{\partial\mathcal L (u,x''_J)}{\partial x''} = 2(x''_J - x_{T,J}) + \theta 
&=0 
\end{aligned}
\right.
\]
which yields the optimal control 
\(
u^* = - \frac{\sqrt{\epsilon\tau}S(x)}{\kappa+\epsilon\tau S^2(x)} ( x - 
x_{T,J}).
\label{eq:ustar}
\)
Thus, plugging $u^*$ into^^>\eqref{eq:mincontrol} we obtain the controlled 
update
\[
x''_J = x - \frac{\epsilon\tau S^2(x)}{\kappa+\epsilon\tau S^2(x)} ( x - 
x_{T,J}),
\]
\reply{which is a non negative quantity, as required.}
The kinetic equation expressing the introduced control strategy \reply{in the presence of elementary transitions of the contact formation dynamics} is a sum of 
collision operators
%We can therefore express the evolution of the distribution^^>$f(\z,x,t)$ in 
%weak form as
%\begin{multline} \label{eq:boltz_control}
%\frac{d}{dt} \intRp \phi(x)f_J(\z,x,t)\, dx = \frac{1}{\epsilon} \intRp B(\z,x)
%\bracket{\phi(x')- \phi(x)}f_J(\z,x,t)\, dx \\
%+ \frac{1}{\epsilon} \intRp B(\z,x) \bracket{\phi(x'')- \phi(x)}f_J(\z,x,t)\, 
%dx,
%\end{multline}
%for any test function $\phi(x)$. Similarly to what we have done in the 
%uncontrolled scenario, under the grazing limit $\epsilon\to0$ and scaling the 
%penalization as $\tau$ in  such a way that $\kappa=\tau\nu$, $\nu>0$, we are 
%able to reach a Fokker-Planck equation with an additional drift term 
%quantifying the non-pharmaceutical interventions
\(
\label{eq:Bol_JC}
\begin{split}
\dfrac{d}{dt} \int_{\mathbb R^+}\varphi(x) f_J(\z,x,t)dx =& \frac{1}{\tau} 
\int_{\mathbb R^+} B(\z,x) \left\langle \varphi(x_J^\prime) - \varphi(x) 
\right\rangle f_J(\z,x,t) dx  \\
&+ \int_{\mathbb R^+} \bar{B}(\z,x) ( \varphi(x_J^{\prime\prime})- \varphi(x) ) 
f_J(\z,x,t) dx
\end{split}
\)
where the first term on the rhs has been defined in \eqref{eq:boltzmann} and 
the second operator describes the impact of non-pharmaceutical interventions on 
the formation of social contacts. In \eqref{eq:Bol_JC} we have introduced also 
a second kernel $\bar{B}(\z,x)$, in principle different from $B(\z,x)$, 
describing the frequency of interactions of the agents under the action of the 
control.

Similarly to what we have done in the uncontrolled scenario, under the grazing 
limit $\epsilon\to0$ and scaling the penalization as $\kappa=\tau\nu$, $\nu>0$, 
we get a surrogate Fokker-Planck model accounting for an additional drift term 
quantifying the impact of the control
\begin{equation}
\label{eq:surr_FPC}
\partial_t f_J(\z,x,t) =\dfrac{1}{\tau} \partial_x \left[ 
\frac{\mu}{2\delta}x^{1-\alpha(\delta)}\lrp*{ \lrp*{\frac{x}{m_J}}^\delta-1 
}f_J(\z,x,t) + 
\frac{\sigma^2}{2}\partial_{x}\lrp*{x^{2-\alpha(\delta)}f_J(\z,x,t) } \right] + 
C_J(\z,x,t)
\end{equation}
where
\begin{equation}
\label{eq:CJ}
C_J(f_J)(\z,x,t) = \frac{1}{\nu} \partial_x \lrp{ \bar{B}(\z,x) S^2(x) 
(x-x_{T,J}) f_J(\z,x,t) },
\end{equation}
see \cite{Dimarco22}, whose steady state  is given by
\[
\begin{split}
f^{\infty}_{J}(\z,x)  = C_{\delta,\sigma^2,\mu,m_J,\nu} &x^{\frac{\mu}{\sigma^2 
\delta} - 2 + \alpha(\delta)} \exp\left\{ 
-\dfrac{\mu}{\sigma^2\delta^2}\lrp*{\dfrac{x}{m_J} }^{\delta}\right\} \\
&\times\exp\left\{ - \frac{2}{\sigma^2 \nu} \int \bar{B}(\z,x) 
x^{\alpha(\delta)-2} S^2(x) (x-x_{T,J})\, dx \right\}  ,
\end{split}\]
corresponding to a generalized Gamma density. 

\begin{remark}
It is interesting to observe that if $\bar B\equiv 1$ we can easily determine a  
$S(x)$ to force a slim tailed equilibrium even in the case $\delta<0$ for any $\z \in\mathbb R^{d_z}$. In particular, we have that any selection function $S(x)$ with superlogarithmic growth is sufficient to ensure that 
$f_J^\infty(\z, x)$ is slim-tailed.
\end{remark}

\subsection{Damping effects on the model uncertainties} \label{sec:damping}
It is of interest to quantify the effects of the introduced controls 
on the uncertainties of the kinetic model. Under suitable hypothesis, it has 
been observed how the lack of information of system of agents can be dampened 
for small penalizations, see e.g. \cite{Medaglia22,TZ_MCRF}. 
In the following, we concentrate on the damping effects of the control in terms 
of the introduced uncertainties by choosing a Maxwellian kernel for the control 
operator, i.e. $\bar{B}(\z,x)\equiv1$, and considering two possible selective 
functions. We consider the uniform control case $S(x)\equiv1$ and the possible selective control that is increasing with $x \in\mathbb R^+$, $S(x)=\sqrt{x}$.

Let us consider the model 
\eqref{eq:Bol_JC} and we  introduce the time scale $\xi = \epsilon t$. We 
restrict our analysis to the case in which $\delta(\z)$ is a discrete random 
variable such that $\delta(\z) \in \{- 1,1\}$. We recall that the mean is 
conserved in time as observed in Remark \ref{rem:1}. 

By indicating $m_J(\z,\xi) = m_J(\z,t/\epsilon)$ we get
\(
\frac{d}{d\xi} m_J(\z,\xi) = \frac{1}{\epsilon\tau} \intRp B(\z,x)
\bracket{x'_J-x}f_J(\z,x,\xi)\, dx 
+ \frac{1}{\epsilon} \intRp  (x''_J- x)f_J(\z,x,t)\, dx.
\)
Hence, by considering the scaled penalization $\kappa = \nu\tau$ we get in the 
limit $\epsilon\to 0$ 
\[
\frac{d}{d\xi} m_J(\z,\xi) = - \dfrac{1}{\tau}\intRp \!\Phi^\delta(\z,x/m_{J}) 
x^{1-\alpha(\delta)} f_J(\z,x,\xi)dx - \frac1\nu \intRp 
S^2(x) (x-x_{T,J}) f_J(\z,x,\xi)\, dx,
\]
whose large time behavior is
\[
\intRp \Phi^\delta(\z,x/m_{J}) x^{1-\alpha(\delta)} f^{\infty}_{J} (\z,x)\, dx 
= - 
\frac\tau\nu \intRp  S^2(x) (x-x_{T,J}) f^{\infty}_{J} (\z,x) \, dx.
\]
We have
\begin{equation}
\label{eq:minfty_abs}
\begin{split}
\abs*{\intRp \Phi^\delta(\z,x/m_{J}) x^{1-\alpha(\delta)} f^{\infty}_{J} 
(\z,x)\, dx} 
&\leq \intRp \abs{\Phi^\delta(\z,x/m_{J})} x^{1-\alpha(\delta)} f^{\infty}_{J} 
(\z,x)\, 
dx\\
&\leq \mu  \; m^\infty_{1-\alpha(\delta), J}(\z), 
\end{split}
\end{equation}
since $\abs{\Phi^\delta} \le \mu$. \reply{In \eqref{eq:minfty_abs} we used the notation $m_{r,J}^\infty(\z)$ to indicate the moment of order $r>0$ of compartment $J \in \I$ at the equilibrium, i.e.
\[
m_{r,J}^\infty = \int_{\mathbb R^+} x^r f^\infty_J(\z,x)dx.
\]}
Let us consider two cases:
\begin{itemize}
\item If we consider $S(x)\equiv1$, then we get
\[
\intRp (x - x_{T,J}) f^{\infty}_{J} (\z,x) \, dx = m^\infty_J - x_{T,J},
\]
which leads to the estimate
\begin{equation} \label{eq:MeanToTarget}
\abs{m^\infty_J - x_{T,J}} \leq \dfrac{\mu\nu}{\tau}\,  
m^\infty_{1-\alpha(\delta), J}(\z),
\end{equation}
where the quantity $m^\infty_{1-\alpha(\delta), J}(\z)$ is finite under the 
assumption  $\delta(\z) \in \{-1,1\}$. Therefore, from 
bound^^>\eqref{eq:MeanToTarget}, we have that a vanishing penalization^^>$\nu$ 
leads to a relaxation of the mean to the target^^>$x_{T,J}$.

Therefore, looking at the variance with respect to the uncertainties $\z \in 
\mathbb R^{d_\z}$, we have for all $J \in \mathcal C$
\[
\textrm{Var}_{\z}(m^\infty_{J}(\z)) = \textrm{Var}_{\z}(m^\infty_J(\z) - 
x_{T,J}) = \mathbb E_{\z}[(m^\infty_J(\z) - x_{T,J})^2] - \mathbb 
E_{\z}[m^\infty_J(\z) - x_{T,J}]^2, 
\]
from which we get
\[
\textrm{Var}_{\z}(m^\infty_J(\z)) \le \mathbb E_{\z}[(m^\infty_J(\z) - 
x_{T,J})^2]
\le \left(\dfrac{\mu\nu}{\tau} \right)^2 \mathbb 
E_{\z}[m_{1-\alpha(\delta),J}^{\infty}(\z)]^2 \to 0
\]
for $\nu \to 0$. 

\item If we consider now $S(x)=\sqrt{x}$, from Jensen's inequality we have
\[
\intRp x^2 f^{\infty}_{J} (\z,x) \, dx \geq \lrp*{\intRp x f^{\infty}_{J} 
(\z,x) \, dx}^2,
\]
so that
\[
\intRp x(x-x_{T,J}) f^{\infty}_{J} (\z,x) \, dx \geq 
m^\infty_J(m^\infty_J-x_{T,J}).
\]
Therefore, we obtain the estimate
\[
\abs{m^\infty_J - x_{T,J}} \leq \dfrac{\mu\nu}{\tau}\,  
\dfrac{m^\infty_{1-\alpha(\delta), J}}{m^\infty_J},
\]
which again, for vanishing penalization $\nu$, implies that the mean reaches 
the target. Considering the variance with respect to the random variables $\z\in\mathbb R^{d_z}$, 
we obtain
\[
\textrm{Var}_{\z}(m^\infty_J(\z)) \le \mathbb E_{\z}[(m^\infty_J(\z) - 
x_{T,J})^2]
\le \left(\dfrac{\mu\nu}{\tau} \right)^2 \mathbb 
E_{\z}\left[\dfrac{m^\infty_{1-\alpha(\delta), J}}{m^\infty_J}\right]^2 \to 0
\]
for $\nu \to 0$. 
\end{itemize}
Hence, we argue that the introduced controls are capable of damping the 
variability due to the presence of uncertainties in the distribution of social 
contacts.

Furthermore in the case of zero diffusion case
$\sigma^2= 0$ we have 
\begin{equation}
\frac{d}{d\xi} E_J(\z,\xi) = \frac{1}{\epsilon\tau} \intRp B(\z,x)
\bracket{(x'_J)^2-x^2}f_J(\z,x,\xi)\, dx 
+ \frac{1}{\epsilon} \intRp ((x''_J)^2- x^2)f_J(\z,x,\xi)\, dx.
\end{equation}
In the limit $\epsilon\to 0$ and $t\to +\infty$ and with the scaled 
penalization $\kappa = \nu \tau$ we obtain
\[
\intRp \Phi^\delta(\z,x/m_{J}) x^{2-\alpha(\delta)} f^{\infty}_{J} (\z,x) \,dx 
= - 
\frac{\tau}{\nu}\intRp  S^2(x) x(x-x_{T,J}) f^{\infty}_{J} (\z,x) \, dx
\]
from which
\[
 \abs*{ \intRp S^2(x) (x^2-x x_{T,J} ) f^{\infty}_{J} (\z,x) \,dx } \leq 
 \dfrac{\mu \nu}{\tau}  m^\infty_{2-\alpha(\delta), J}(\z).
\]
\begin{itemize}
    \item Considering $S(x)\equiv1$, we get
    \[
    0 \le \abs{E^\infty(\z) - m^\infty(\z)\cdot x_{T,J}} \le 
    \dfrac{\mu\nu}{\tau}\, m^\infty_{2-\alpha(\delta), J}(\z),
    \]
    which gives the bound
    \[
    0 \le \abs{E^\infty(\z) - (m^\infty(\z))^2} \le \frac{\mu\nu}{\tau}\cdot 
    m_{2-\alpha(\delta),J}^\infty(\z),
    \]
    observing that in the limit $\nu \to 0$ we have $m^\infty(\z)\to x_{T,J}$.
    \item If we consider $S(x)=\sqrt{x}$, we have
    \[
    0 \le \abs{(m^\infty(\z))^3 - E^\infty(\z)\cdot x_{T,J}} \le 
    \dfrac{\mu\nu}{\tau}\, m^\infty_{2-\alpha(\delta), J}(\z),
    \]
    where again, in the limit $\nu\to 0^+$, we have $m^\infty(\z)\to x_{T,J}$.
\end{itemize}

Therefore, we can observe that the introduced controls push the energy $E^{\infty}(\z)$ towards the square of the mean number of contacts $m^{\infty}(\z)$. In other words, the steady state converges to a Dirac delta distribution centered at $x = x_{T,J}$.  

\subsection{Controlled kinetic epidemic model}

Once defined the control of the social dynamics, we can define a new kinetic 
epidemic model embedding the presence of non-pharmaceutical interventions. 
Following the discussions of Section \ref{subsect:controlled}, we combine the 
epidemic process with the controlled contact dynamics as 
\begin{equation}
\left\lbrace
\begin{aligned}
\pd{f_S(\z,x,t)}{t}  &= -K(f_S, f_I)(\z,x,t) + \frac{1}{\tau} Q_S(f_S)(\z,x,t) 
+ C_S(f_S)(\z,x,t),\\ 
\pd{f_E(\z,x,t)}{t}  &= K(f_S, f_I)(\z,x,t) - \zeta(x) f_E(\z,x,t) + 
\frac{1}{\tau} Q_E(f_E)(\z,x,t) + C_E(f_E)(\z,x,t),\\
\pd{f_I(\z,x,t)}{t}  &= \zeta(x) f_E(\z,x,t) - \gamma(x)f_I(\z,x,t) 
+\frac{1}{\tau} Q_I(f_I)(\z,x,t) + C_I(f_I)(\z,x,t),\\
\pd{f_R(\z,x,t)}{t}  &= \gamma(x) f_I(\z,x,t) + \frac1\tau Q_R(f_R)(\z,x,t) + 
C_R(f_R)(\z,x,t).
\end{aligned}
\right.
\label{eq:seir_control}
\end{equation}
As discussed in Section \ref{sect:2}, the transmission of the infection is governed 
by the local incidence rate $K(f_S,f_I)$ defined in \eqref{eq:K}, the 
thermalization of the distribution of social contacts in each compartment is 
given by $Q_J(f_J)$ together with the operators $C_J(f_J)$ defined in 
\eqref{eq:surr_FPC}. 

It is interesting to observe how, under the introduced scaling, the definition 
of non-pharmaceutical interventions acts at the same time scale of the epidemic 
dynamics. Hence, the equilibrium states of the dynamics of social contacts
result unaltered by the introduction of the control. This fact will be 
essential in the subsequent section to derive second order macroscopic models 
describing the evolution of the conserved moments of \eqref{eq:seir_control}.

\section{Observable effects of non-pharmaceutical interventions}\label{sect:4}

Epidemiological data are typically macroscopic quantities characterizing the 
evolution of a subset of the introduced compartments. In the following, we 
derive a macroscopic model which is consistent with the introduced kinetic 
epidemic model.

We recall here that in^^>\cite{Dimarco21,Dimarco22,Zanella21} one of the 
underlying assumptions was that the contact distribution of the population 
could be fruitfully estimated as an experimentally consistent Gamma 
distribution^^>\cite{Beraud15}. In this work, we put uncertainty precisely on 
the nature of the tail of the contact distribution, which in principle changes 
the characteristic of the related macroscopic system, thus changing also the 
efficacy of the containment strategies.

\subsection{Derivation of the macroscopic model}

Recalling that the operators $Q_J$ and $C_J$, coupled with no-flux boundary 
conditions, are mass-preserving, let us integrate 
system^^>\eqref{eq:seir_control} with respect to^^>$x$ to obtain
\begin{equation}
\left\lbrace
\begin{aligned}
\frac{d \rho_S(\z,t)}{dt}  &= - \beta 
m_S(\z,t)\rho_S(\z,t)m_I(\z,t)\rho_I(\z,t) ,\\ 
\frac{d \rho_E(\z,t)}{dt}  &= \beta m_S(\z,t)\rho_S(\z,t)m_I(\z,t)\rho_I(\z,t) 
- \zeta \rho_E(\z,t),\\
\frac{d \rho_I(\z,t)}{dt}  &= \zeta \rho_E(\z,t) - \gamma \rho_I(\z,t),\\
\frac{d \rho_R(\z,t)}{dt}  &= \gamma  \rho_I(\z,t),
\end{aligned}
\right.
\label{eq:mass}
\end{equation}
under the assumption on the local incidence rate \eqref{eq:Kalpha1}. In 
\eqref{eq:mass} we obtained a system for the evolution of the mass fractions. 
However, we can observe that the system is not closed like in the ones in the  
classical compartmental framework, since the evolution of $\rho_J(\z,t)$ depends 
on the evolution of the first order moment of the distribution functions 
$f_J(\z,x,t)$. The evolution of the momentum reads 
\[
\begin{split}
\dfrac{d}{dt} (\rho_S(\z,t)m_S(\z,t)) &= -\beta 
m_{2,S}(\z,t)\rho_S(\z,t)m_I(\z,t)\rho_I(\z,t) + \int_{\mathbb R^+} x 
C_S(f_S)(\z,x,t)dx, \\
\dfrac{d}{dt} (\rho_E(\z,t)m_E(\z,t)) &= \beta 
m_{2,S}(\z,t)\rho_S(\z,t)m_I(\z,t)\rho_I(\z,t) - \zeta m_E(\z,t)\rho_E(\z,t) + 
\int_{\mathbb R^+} x C_E(f_E)(\z,x,t)dx, \\
\dfrac{d}{dt} (\rho_I(\z,t)m_I(\z,t)) &= \zeta m_E(\z,t) \rho_E(\z,t) - \gamma 
m_I(\z,t)\rho_I(\z,t) + \int_{\mathbb R^+} x C_I(f_I)(\z,x,t)dx, \\
\dfrac{d}{dt} (\rho_I(\z,t)m_I(\z,t)) &= \gamma m_I(\z,t)\rho_I(\z,t) + 
\int_{\mathbb R^+} x C_R(f_R)(\z,x,t)dx, 
\end{split}
\]
where from \eqref{eq:CJ} we get
\[
\int_{\mathbb R^+} x C_J(f_J)(\z,x,t)dx = \int_{\mathbb R^+}   S^2(x)(x_T-x) 
f_J(\z,x,t)dx. 
\]
The hierarchical coupling of moments is a well-known problem in kinetic theory. 
The closure can, however, be obtained formally by
resorting to a limit procedure. Indeed, assuming that the 
time scale involved in the process of contact formation is $\tau \ll 1$, 
we obtain a fast thermalization of the contact distribution of agents with 
respect to the evolution of the epidemics. Therefore, for $\tau \ll 1$ the 
distribution function $f_J(\z,x,t)$ reaches fast the steady state equilibrium, 
which is a generalized Gamma distribution 
with mass fractions $\rho_J^\infty$ and local mean values $m_J^\infty$.

As observed in Remark \ref{rem:1}, the case in which $\delta(\z)$ is a discrete 
random variable such that $\delta(\z) \in \{-1,1\}$ is particularly interesting 
in the present modeling approach since the mean is conserved. In the following, 
we stick to this choice and we assume also 
\( \label{eq:delta}
\delta(\z) = 1 -2\z, \qquad \z\sim\text{Bernoulli}(p),
\)
such that
\[
\delta(\z) = 
\begin{cases}
-1 & \textrm{Prob}(\delta = -1) = p \\
1 & \textrm{Prob}(\delta = 1) = 1-p.
\end{cases}
\]
Under this assumption, we can express the second order moment of 
 the generalized Gamma distributions in terms of the mean 
\[
m_{2,J}^\infty(\z) = \int_{\mathbb R^+}x^2 f^\infty_J(\z,x)dx = 
\Lambda_\delta(\z) (m_J^\infty(\z))^2, \qquad \Lambda_\delta(\z) = \lrp*{ 
\dfrac{\lambda + \delta(\z)}{\lambda} }^{\delta(\z)}, 
\]
where we recall that we fixed $\lambda = \mu/\sigma^2$. Therefore, at the 
macroscopic level, we obtain the following system of equations for the time 
evolution of the first order moments in each compartment
\begin{equation}
\left\lbrace
\begin{aligned}
\frac{d m_S(\z,t)}{dt}  &= -\beta \lrp{\Lambda_\delta(\z) -1 } m_S^2(\z,t) 
m_I(\z,t) \rho_I(\z,t) 
                            + G_S(f^\infty_S)(\z,t) \\
\frac{d m_E(\z,t)}{dt}  &= \beta \frac{m_S(\z,t)\rho_S(\z,t) 
m_I(\z,t)\rho_I(\z,t)}{\rho_E(\z,t)} 
                            \lrp{\Lambda_\delta(\z) m_S(\z,t) - m_E(\z,t)}
                            +G_E(f^\infty_E)(\z,t) \\
\frac{d m_I(\z,t)}{dt}  &= \zeta \frac{\rho_E(\z,t)}{\rho_I(\z,t)} (m_E(\z,t) - 
m_I(\z,t)) 
                            + G_I(f^\infty_I)(\z,t) \\
\frac{d m_R(\z,t)}{dt}  &=  \gamma  \frac{\rho_I(\z,t)}{\rho_R(\z,t)} 
(m_I(\z,t) - m_R(\z,t))
                            + G_R(f^\infty_R)(\z,t).
\end{aligned}
\right.
\label{eq:seirAme2}
\end{equation}

In \eqref{eq:seirAme2} the terms $G_J(f_J)$, $J \in \C$, embed the action of the control at 
the level of the mean number of social contacts and read 
\begin{equation}
   G_J(f^\infty_J)(\z,t) = \frac{1}{\nu \rho_J(\z,t)} \intRp S^2(x)(x_{T,J}-x) 
   f^\infty_J(\z,x) dx.
    \label{eq:CJfinfty}
\end{equation}
We observe now that \eqref{eq:mass} and \eqref{eq:seirAme2} describe in closed 
form the time evolution of an epidemic where the transition between 
compartments depend on the mean number of social contacts in the population.

In particular, in the cases $S^2(x)\equiv 1$ and $S^2(x) = x$,  we have
\[
G_J(f_J^\infty)(\z,t) = 
\begin{cases}\vspace{0.25cm}
\dfrac{1}{\nu}\left[ x_T  - m_J(\z,t)\right], & S^2(x) \equiv 1 \\
\dfrac{m_J(\z,t)}{\nu}\left[ x_T - \Lambda_\delta(\z) m_J(\z,t)\right] & S^2(x) =x, 
\end{cases}
\]
For small penalization of the control $\nu\to 0^+$, the mean number of 
connections stabilizes towards the values 
\[
m_J^\infty(\z) =  
\begin{cases}
x_T & S^2(x) \equiv 1 \\
\dfrac{x_T}{\Lambda_\delta(\z)} & S^2(x) = x. 
\end{cases}
\] 
Therefore, a selective strategy may outperform the uniform one depending on 
the value of $\Lambda_\delta(\z)$. We observe that, for vanishing penalizations, the expected number of connections in the compartment $J \in \C$ are such that $\mathbb E_\z[m_J^\infty(\z)] < x_T$ if $p<1/2$, indeed exploiting the information in \eqref{eq:delta} we get
\[
\mathbb E_\z[\Lambda_\delta(\z)] = \dfrac{\lambda+1-2p}{\lambda}>1. 
\]
%
%\[
%\mathcal{C}_J(f^\infty_J)(\z,t) = \frac{1}{\nu} \lrp*{\frac{\delta + 
%1}{2}\frac{\lambda}{\lambda-1} + \frac{\delta - 1}{2} m_J(\z,t) } \lrp*{ 
%\frac{x_{T,J}}{m_J(\z,t)} - 1}, \qquad S^2(x)=1
%\]
%and
%\[
%\begin{split}
%\mathcal{C}_J(f^\infty_J)(\z,t) &= \frac{1}{\nu} \bigg[ x_{T,J}-m_J(\z,t)+ 
%\frac{\delta\lambda}{\lambda-1}\lrp*{ \frac{\delta + 
%1}{2}\frac{x_{T,J}}{m_J(\z,t)} + \frac{\delta - 1}{2} m^2_J(\z,t) } \\
% &{\hphantom{{}=}} + \lrp*{ -\frac{\delta + 1}{2} + \frac{\delta - 1}{2} 
%x_{T,J} m_J(\z,t) } \bigg], \qquad S^2(x)=x+1.
%\end{split}
%\]
% \begin{description}
% \item[$\delta=-1$]
% \[
% \intRp C(f_J^\infty)(t)\, dx = 
% \left\lbrace
% \begin{array}{@{}ll}
% \displaystyle\frac1\nu \rho_J(t)(m_J(t) - x_{T,J}) & S^2(x) = 1\\[2.5ex]
% \displaystyle\frac1\nu \rho_J(t)\biggl(\frac{\lambda}{\lambda - 1} m_J^2(t) + 
% (1 - x_{T,J}) m_J(t) - x_{T,J}\biggr) & S^2(x) = x + 1
% \end{array}
% \right.
% \]
% \item[$\delta=1$]
% \[
% \intRp C(f_J^\infty)(t)\, dx =
% \left\lbrace
% \begin{array}{@{}ll}
% \displaystyle\frac1\nu \rho_J(t)\biggl(1 - \frac{\lambda}{\lambda-1} 
% \frac{x_{T,J}}{m_J(t)}\biggr) & S^2(x) = 1\\[2.5ex]
% \displaystyle\frac1\nu \rho_J(t)\biggl(m_J(t) + 1 - 
% x_{T,J}-\frac{\lambda}{\lambda - 1} \frac{x_{T,J}}{m_J(t)} \biggr) & S^2(x) = 
%x 
% + 1
% \end{array}
% \right.
% \]
% \end{description}
% where we used the fact that for $\delta = 1$, i.e., for a Gamma distribution,
% \[
% m_{-1,J}^\infty = \frac\lambda{\lambda-1} \frac1{m_J^\infty}.
% \]

\section{Numerical examples}\label{sec:numerics}
In this section, we present several numerical results. We first construct an 
implicit structure preserving (SP) method \cite{Pareschi18, Zanella20} with a 
stochastic-Galerkin approach \cite{Xiu10, Dimarco22_2, Zhu17} for 
system^^>\eqref{eq:seir_control}. This kind of methods are spectrally accurate 
in the space of the random parameters under suitable regularity assumptions. For a survey on available methods for the uncertainty quantification of kinetic models we mention \cite{Pareschi21} and the references therein.   In 
particular, we study the influences of the uncertainties in the spreading of an 
epidemics and the capability of the designed control strategies in reducing 
both the peak of the epidemics and the variability of the results given by the 
random parameters. 

Furthermore, we consider the macroscopic system of ODEs \eqref{eq:mass}-\eqref{eq:seirAme2} 
and we estimate relevant parameters characterizing  non-pharmaceutical interventions based on real epidemiological data. We first estimate 
the relevant epidemiological parameters thanks to the dataset of the John 
Hopkins University\footnote{https://github.com/CSSEGISandData/COVID-19\quad 
Last accessed: 26th September 2022.}. Hence, we evaluate the impact of 
different control strategies during the first wave of infection in Italy.

\subsection{Stochastic Galerkin methods} \label{sec:sGmethod}
\begin{figure}[t]
	\centering
    \includegraphics[width = 0.45\linewidth]{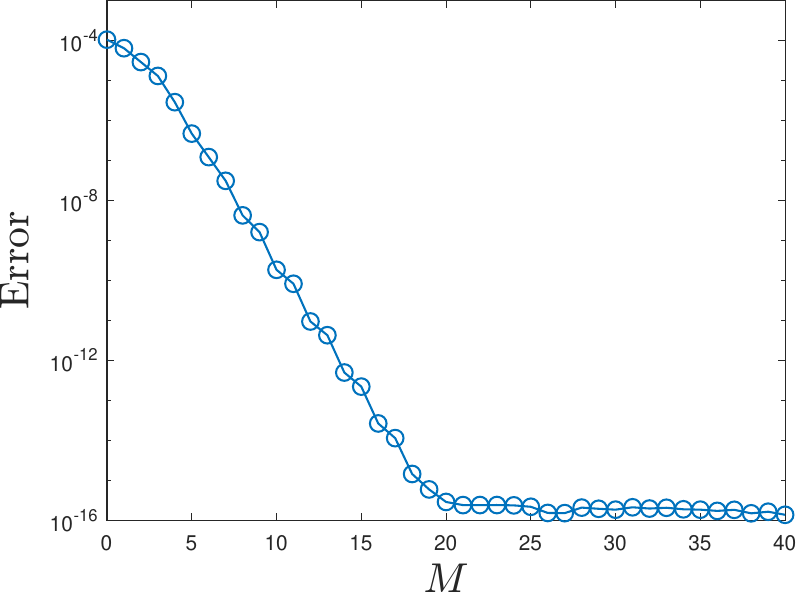}
	\caption{Convergence of the $L^2$ error of the first order moment with respect to a 
	reference solution computed with $M=40$ at fixed time $T=1$. We choose $\Delta x=0.02$ in the interval $[0,500]$, and $\Delta t=0.1$ with $\tau=10^{-5}$. 
	The uncertain parameter is $\delta(\z)=\z$ with 
	$\z\sim\mathcal{U}([-1,1])$. Initial conditions given by \eqref{eq:test0}.}
%	\label{fig:sGerror}
	\label{fig:figure1}
\end{figure}

%\begin{equation}
%\text{Contact dynamics: }
%\left\lbrace
%\begin{aligned}
%&\frac{\partial\mathbf{f}^*}{\partial t} =\frac{1}{\tau} \mathbf{{Q}}\lrp{ 
%\mathbf{f}^* }, \\ 
%&\mathbf{f}^*(x,0) = \mathbf{f}^n(x),
%\end{aligned}
%\right.
%\label{eq:FP_split}
%\end{equation} 
%\begin{equation}
%\text{Epidemic exchange: }
%\left\lbrace
%\begin{aligned}
%&\frac{\partial\mathbf{f}^{**}}{\partial t} =\mathbf{{P}}\lrp{x, 
%\mathbf{f}^{**} }, \\ 
%&\mathbf{f}^{**}(x,0) = \mathbf{f}^*(x,\Delta t),\\
%\end{aligned}
%\right.
%\label{eq:Epidemic_split}
%\end{equation} 
%in a way that the solution at time $t^{n+1}$ is 
%$\mathbf{f}^{n+1}(x)=\mathbf{f}^{**}(x)$, solution to 
%\eqref{eq:Epidemic_split} 
%with the initial condition given by \eqref{eq:FP_split}.
In order to solve numerically system \eqref{eq:seir_control}, let us rewrite it 
in vector form 
\( \label{eq:sys_vector2}
\frac{\partial \mathbf{f}}{\partial t} (\z,x,t) = \mathbf{{P}}\lrp{ x, 
\mathbf{f} (\z,x,t) } + \frac{1}{\tau} \mathbf{{Q}}\lrp{ \mathbf{f} (\z,x,t) } 
+ \mathbf{{C}}\lrp{ \mathbf{f} (\z,x,t) },
\)
where $\mathbf{f}=\{f_J\}_J$, $\mathbf{{Q}}=\{Q_J\}_J$, 
$\mathbf{{C}}=\{C_J\}_J$, $J=\{S,E,I,R\}$, and $\mathbf{{P}}$ is the vector 
whose components are the transitions rates between the compartments.

Stochastic Galerkin (sG) methods are based on the approximation of the solution 
$\mathbf{f} (\z,x,t)$ on a set of polynomials $\{\Psi_h(\z)\}_{h=0}^M$ of 
degree less or equal to $M\in\mathbb{N}$, orthonormal with respect to the 
distribution of the random parameters, such that
\[
\mathbf{f} (\z,x,t) \approx \mathbf{f}^M (\z,x,t) = \sum_{h=0}^M 
\hat{\mathbf{f}}_h (x,t) \Psi_h(\z).
\]
The polynomials are chosen following the so-called Wiener--Askey scheme 
\cite{Xiu10, Xiu02}. In the previous relation, we denote by $\hat{\mathbf{f}}_h 
(x,t)=\{\hat{f}_{h,J}(x,t)\}_J$ the projections of the solution along the 
linear space generated by the polynomial of degree $h$
\[
\hat{\mathbf{f}}_h (x,t) = \int_\Omega \mathbf{f} (\z,x,t) \Psi_h(\z) p(\z) d\z 
\coloneqq \mathbb{E}_{\z} [\mathbf{f} (\z,x,t) \Psi_h(\z)],
\]
where we denote by $\Omega\subseteq\R^{d_\z}$ the space of the random parameters.

We discretize the time domain $[0,T]$ with a time step of size $\Delta t>0$ and 
we denote by $\mathbf{f}^n(x)$ an approximation of $\mathbf{f}(x,t^n)$ with 
$t^n=n\Delta t$. The first order time splitting method reads:
\begin{equation}
\text{Contact \& control dynamics: }
\left\lbrace
\begin{aligned}
&\frac{\partial\mathbf{f}^*}{\partial t} =\frac{1}{\tau} \mathbf{{Q}}\lrp{ 
\mathbf{f}^* } + \mathbf{{C}}\lrp{ \mathbf{f}^* }, \\ 
&\mathbf{f}^*(\z,x,0) = \mathbf{f}^n(\z,x),
\end{aligned}
\right.
\label{eq:FP_split_c}
\end{equation} 
\begin{equation}
\text{Epidemic exchange: }
\left\lbrace
\begin{aligned}
&\frac{\partial\mathbf{f}^{**}}{\partial t} =\mathbf{{P}}\lrp{x, 
\mathbf{f}^{**} }, \\ 
&\mathbf{f}^{**}(\z,x,0) = \mathbf{f}^*(\z,x,\Delta t).\\
\end{aligned}
\right.
\label{eq:Epidemic_split_c}
\end{equation}
\reply{We plug $\mathbf{f}^M$ into 
\eqref{eq:FP_split_c}--\eqref{eq:Epidemic_split_c} and we project against 
$\Psi_h(\z) p(\z) d\z$ on $\Omega$ for each $h=0,\dots,M$.} Hence, we 
end with two systems of $M+1$ vector equations for the coefficients of the 
expansion. 

The sG reformulation of the contact embedding the control dynamics reads
\begin{align}\label{eq:FPsG}
\frac{\partial \hat{f}^*_{h,J} }{\partial t} (x,t) = &  
\frac{\partial}{\partial x} \sum_{k=0}^M \hat{f}^*_{k,J} (x,t) 
\!\int_\Omega \lrp*{ \frac{\mu 
x^{1-\alpha(\delta(\z))}}{2\delta(\z)}\lrp*{\!\lrp*{\frac{x}{m_J(\z,t)}}^{\delta(\z)} -1} 
+ \frac{S^2(x)}{\nu} (x-x_{T,J})\!} \Psi_k(\z) \Psi_h(\z) p(\z) d\z\notag 
\\
& + \frac{\partial^2}{\partial x^2} \sum_{k=0}^M \hat{f}^*_{k,J} (x,t) 
\int_\Omega \frac{\sigma^2}{2} x^{2-\alpha(\delta(\z))}  \Psi_k(\z) \Psi_h(\z) 
p(\z) d\z.
\end{align}
We discretize \eqref{eq:FPsG} with a central finite differences approach and 
we apply a fully-implicit-in-time scheme following the construction presented in \cite{Dimarco22_2, Zanella20}. 

The epidemic exchange system is
\begin{equation}
\label{eq:hatfJ_time}
\left\lbrace
\begin{aligned}
\pd{\hat{f}_{h,S}(x,t)}{t}  &= - \beta x \sum_{k=0}^M \hat{f}_{k,S}(x,t) 
\int_\Omega m_{I}^{M}(\z,t) \rho_{I}^{M}(\z,t) \Psi_k(\z) \Psi_h(\z) p(\z) 
d\z ,\\ 
\pd{\hat{f}_{h,E}(x,t)}{t}  &= \beta x \sum_{k=0}^M \hat{f}_{k,S}(x,t) 
\int_\Omega m_{I}^{M}(\z,t) \rho_{I}^{M}(\z,t) \Psi_k(\z) \Psi_h(\z) p(\z) 
d\z - \zeta(x) \hat{f}_{h,E}(x,t),\\
\pd{\hat{f}_{h,I}(x,t)}{t}  &= \zeta(x) \hat{f}_{h,E}(x,t) - \gamma(x) 
\hat{f}_{h,I}(x,t),\\
\pd{\hat{f}_{h,R}(x,t)}{t}  &= \gamma(x) \hat{f}_{h,I}(x,t),
\end{aligned}
\right.
\end{equation}
where 
\[
m_{I}^{M}(\z,t) \rho_{I}^{M}(\z,t) = \intRp x f_{I}^{M}(\z,x,t) dx.
\]
System \eqref{eq:hatfJ_time} is then integrated through a first order Euler method. 

To show the spectral convergence property of the designed sG method, 
we consider a contact dynamics in the uncontrolled scenario, i.e. with 
$S(x)=0$, of a generic compartment^^>$J$, that is, we take a single component 
of \eqref{eq:FPsG}. We compute a reference solution with $N=25001$ grid points 
of size $\Delta x = 0.02$ in the $x$-domain $[0,500]$, $\Delta t=0.1$, 
$\tau=10^{-5}$ and sG expansion up to order $M=40$. We fix the parameters as 
$\mu = 0.5$, $\sigma^2 = 0.1$, being $\lambda=\mu/\sigma^2$, and we consider a 
one-dimensional uncertainty in a way that $\delta(\z)=\z$ with 
$\z\sim\mathcal{U}([-1,1])$. Since the distribution of $\z$ is uniform, we 
consider Legendre polynomials. The initial distribution is a deterministic Gamma
\( \label{eq:test0}
f^0_J(x)= \frac{x^{\lambda-1} e^{\lambda x / m_J} 
(\lambda/m_J)^\lambda}{\Gamma(\lambda)}
\)
with $m_J=10$. Then, we compute the $L^2$ error on the first order moment of 
the distribution at fixed time $T=1$ for increasing $M$. 

In Figure^^>\ref{fig:figure1}%\ref{fig:sGerror}
, we may observe the decay of the numerical error in 
the space of the random parameter as the order of accuracy increases. We 
observe that we reach essentially the machine precision within a finite order 
$M$.

\subsection{Test 1: Uncontrolled model} \label{sec:uncontrolled}
\begin{figure}[t]
	\centering
	\includegraphics[width = 0.45\linewidth]{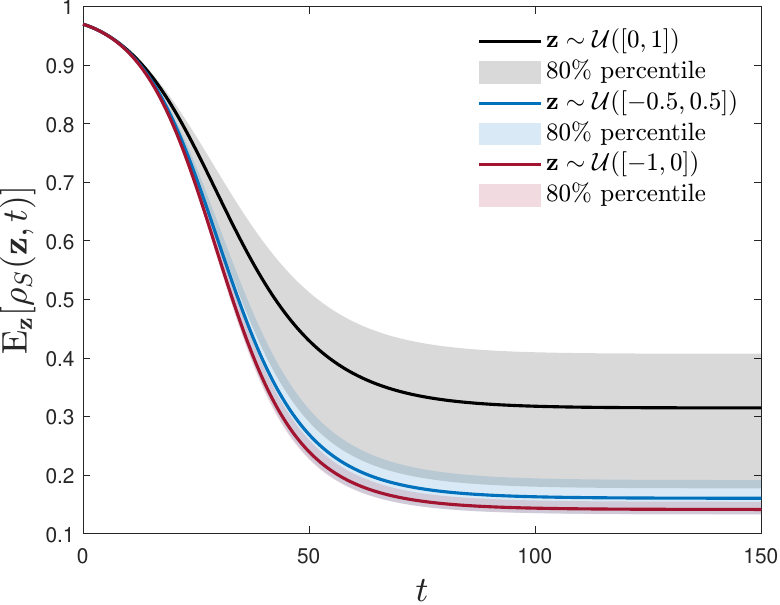}
	\includegraphics[width = 0.45\linewidth]{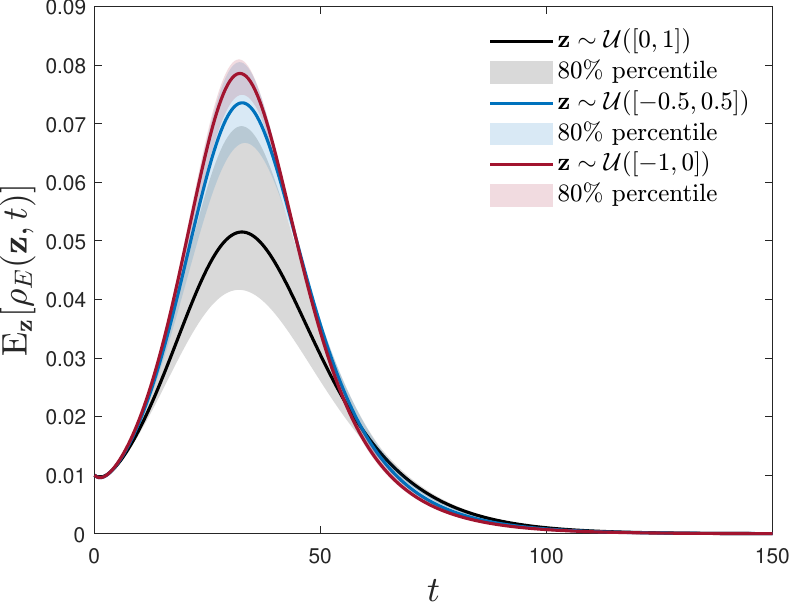}
	\includegraphics[width = 0.45\linewidth]{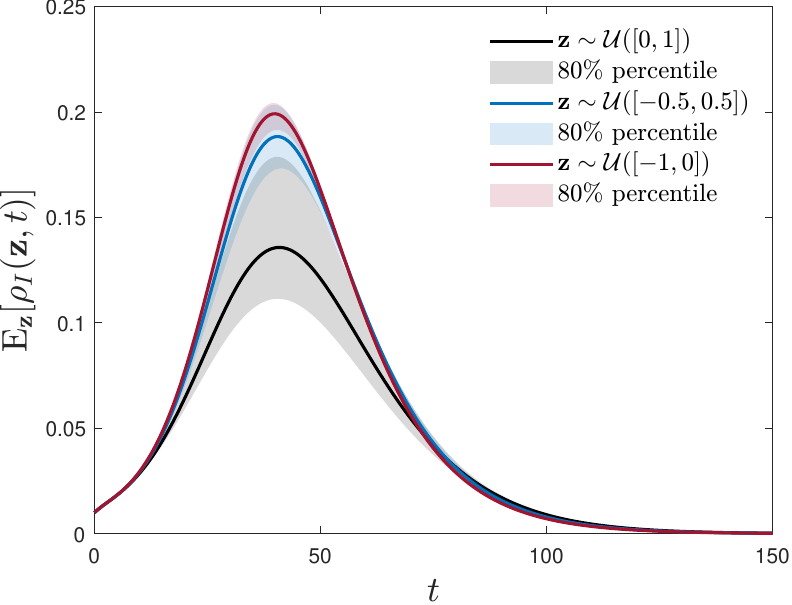}
	\includegraphics[width = 0.45\linewidth]{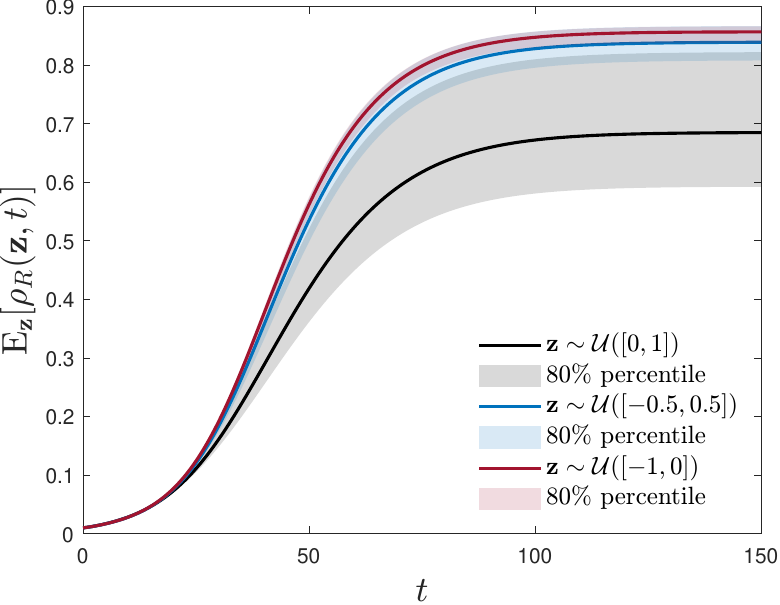}
	\caption{\small{\textbf{Test 1.} Expectations 
	of the masses $\rho_J(\z,t)$ for every compartment $J\in\{S,E,I,R\}$, as a 
	function of the time. We compare different scenarios corresponding to the 
	choices $\z\sim\mathcal{U}([0,1])$ (black), $\z\sim\mathcal{U}([-0.5,0.5])$ 
	(blue) and $\z\sim\mathcal{U}([-1,0])$ (red). We choose $\Delta x=0.02$ in 
	the interval $[0,500]$, $\Delta t=0.1$ with $T=150$ and $\tau=10^{-5}$. The 
	sG expansion is of order $M=5$. Initial conditions given by 
	\eqref{eq:test1_init}.}}
%	\label{fig:uncontrolled}
	\label{fig:figure2}
\end{figure}
In this section, we focus on the uncontrolled scenario, i.e., system 
\eqref{eq:sys_vector2} with $S(x)=0$. We fix the parameters as $\beta = 
0.0025$, $\gamma = 0.1$, $\zeta = 0.3$, $\mu = 0.5$, $\sigma^2 = 0.1$, with 
$\lambda=\mu/\sigma^2$, we consider a one-dimensional uncertainty in a way that 
$\delta(\z)=\z$ and we investigate the behavior of the model for a uniform 
random variable $\z$ with different support. The $x$-domain is $[0,500]$, 
discretized with $N=25001$ grid points of size $\Delta x = 0.02$, the time 
domain $[0,150]$ is discretized with the time step $\Delta t=0.1$; the scale 
parameter is $\tau=10^{-5}$. We fix the sG expansion up to order $M=5$ in all 
the simulations. The initial conditions for the $f^0_J(x)$ are deterministic 
Gamma distributions
\( \label{eq:test1_init}
f^0_J(x)=\rho^0_J \, \frac{x^{\lambda-1} e^{\lambda x / m^0_J} (\lambda / 
m^0_J)^\lambda}{\Gamma(\lambda)}
% & f^0_S(x)=\rho^0_S \, \frac{x^{\lambda-1} e^{\lambda x / m^0_S} (\lambda / 
%m^0_S)^\lambda}{\Gamma(\lambda)} \qquad \textrm{with}\qquad \rho^0_S=0.97, 
%\qquad  m^0_S=10 \\
% & f^0_E(x)=\rho^0_E \, \frac{x^{\lambda-1} e^{\lambda x / m^0_E} (\lambda / 
%m^0_E)^\lambda}{\Gamma(\lambda)} \qquad \textrm{with}\qquad \rho^0_E=0.01, 
%\qquad  m^0_E=10 \\
% & f^0_I(x)=\rho^0_I \, \frac{x^{\lambda-1} e^{\lambda x / m^0_I} (\lambda / 
%m^0_I)^\lambda}{\Gamma(\lambda)} \qquad \textrm{with}\qquad \rho^0_I=0.01, 
%\qquad  m^0_I=10 \\
% & f^0_R(x)=\rho^0_R \, \frac{x^{\lambda-1} e^{\lambda x / m^0_R} (\lambda / 
%m^0_R)^\lambda}{\Gamma(\lambda)} \qquad \textrm{with}\qquad \rho^0_R=0.01, 
%\qquad  m^0_R=10 .
\)
with $\rho^0_S=0.97$, $\rho^0_E=\rho^0_I=\rho^0_R=0.01$ and $m^0_J=10$ for 
every compartment $J$.

%In Figure \ref{fig:uncontrolled}
In Figure^^>\ref{fig:figure2}
we show the time evolution of the masses of 
the compartments for different choices of the random parameter, namely:
\begin{enumerate}[a)] 
    \item \label{list:a} $\z\sim\mathcal{U}([0,1])$ (black);
    \item \label{list:b} $\z\sim\mathcal{U}([-0.5,0.5])$ (blue);
    \item \label{list:c} $\z\sim\mathcal{U}([-1,0])$ (red).
\end{enumerate}
We observe that the choice \ref{list:c} is associated to a contact equilibrium 
with fat tails, indicating that there exists a higher probability that agents 
possess a great number of contacts. Indeed we observe that this choice 
generates at the equilibrium the smallest number of Susceptible and the highest 
number of Removed with respect to the other ones, indicating that the epidemics 
has spread more. Moreover, note also how the peaks of the Infected and Exposed 
are above the others. The choice \ref{list:b}, associated to contact 
equilibrium with both fat  and slim tails, exhibits an intermediate behavior 
with respect to \ref{list:a}, which is associated to slim tails, and 
\ref{list:c}, as expected.

\subsection{Test 2: Consistency of the macroscopic limit}

\begin{figure}
	\centering
	\includegraphics[width = 0.35\linewidth]{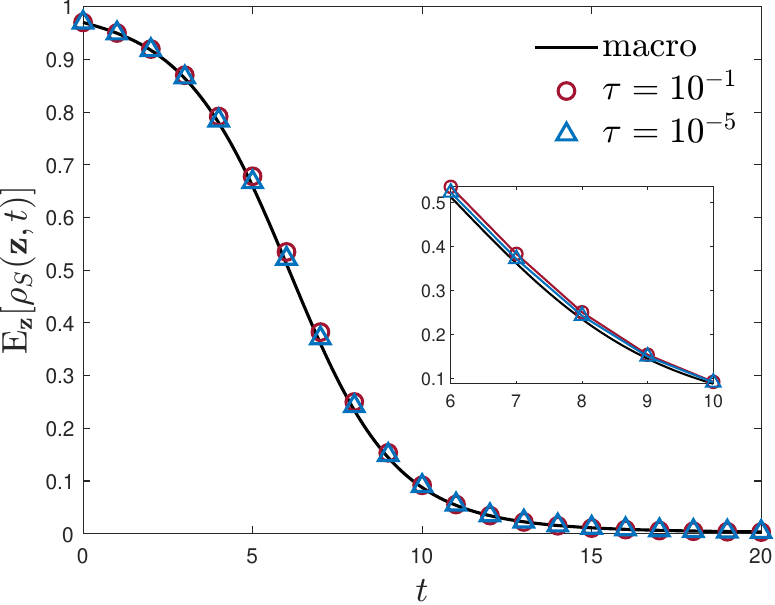}\qquad
	\includegraphics[width = 0.35\linewidth]{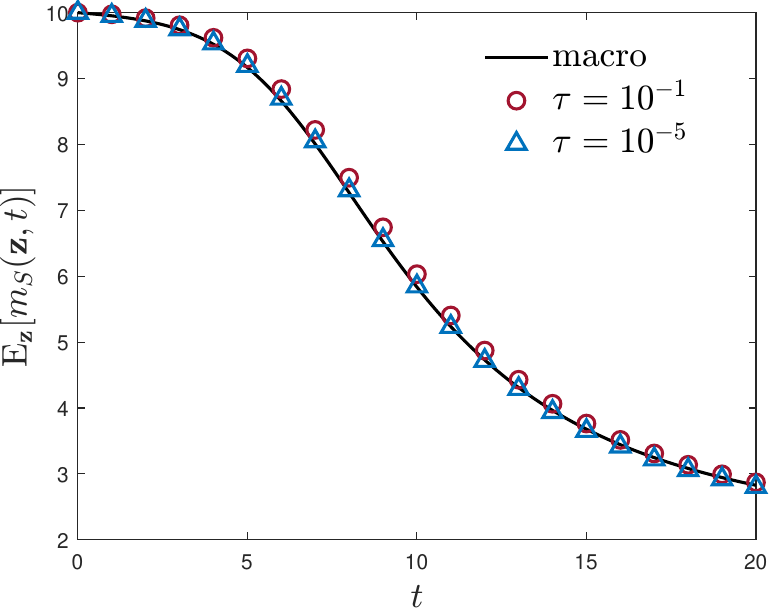}\\
	\includegraphics[width = 0.35\linewidth]{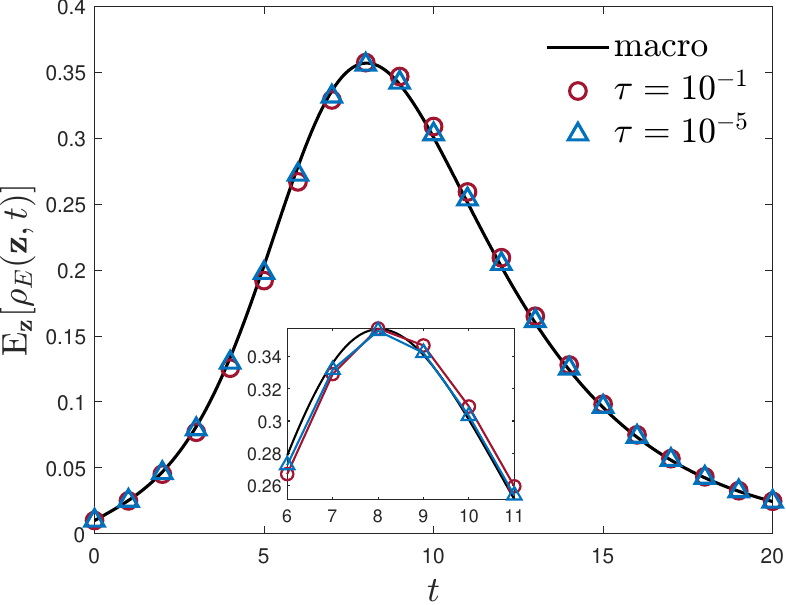}\qquad
	\includegraphics[width = 0.35\linewidth]{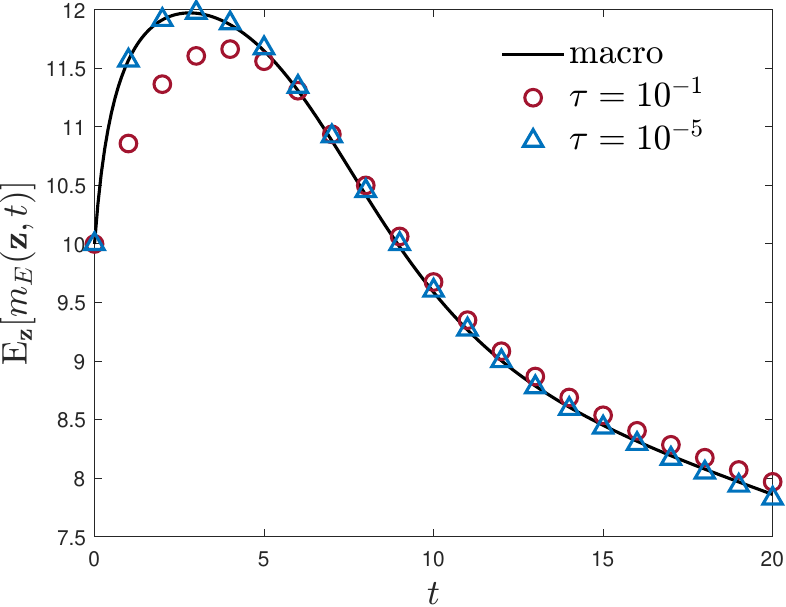}\\
	\includegraphics[width = 0.35\linewidth]{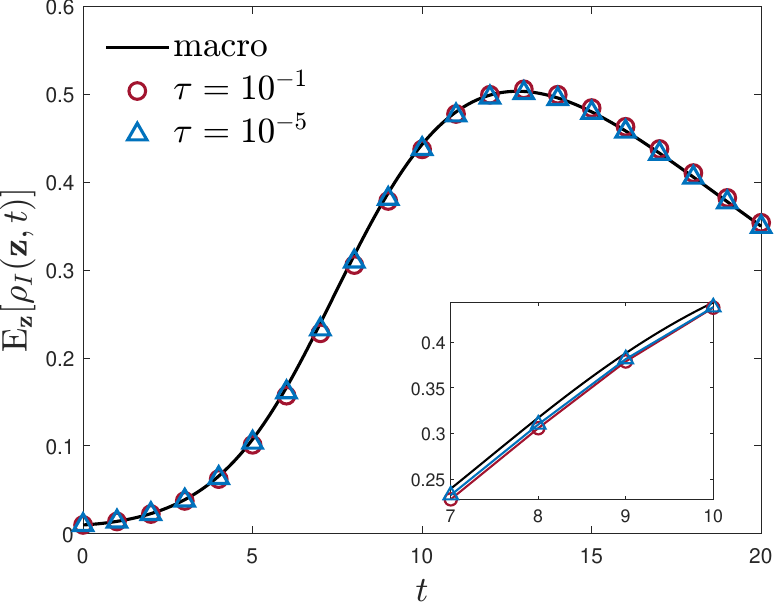}\qquad
	\includegraphics[width = 0.35\linewidth]{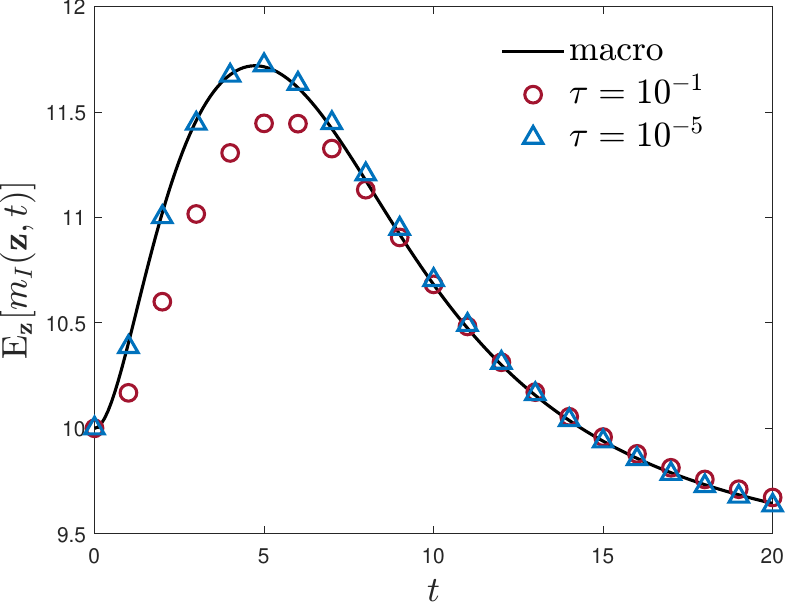}\\
	\includegraphics[width = 0.35\linewidth]{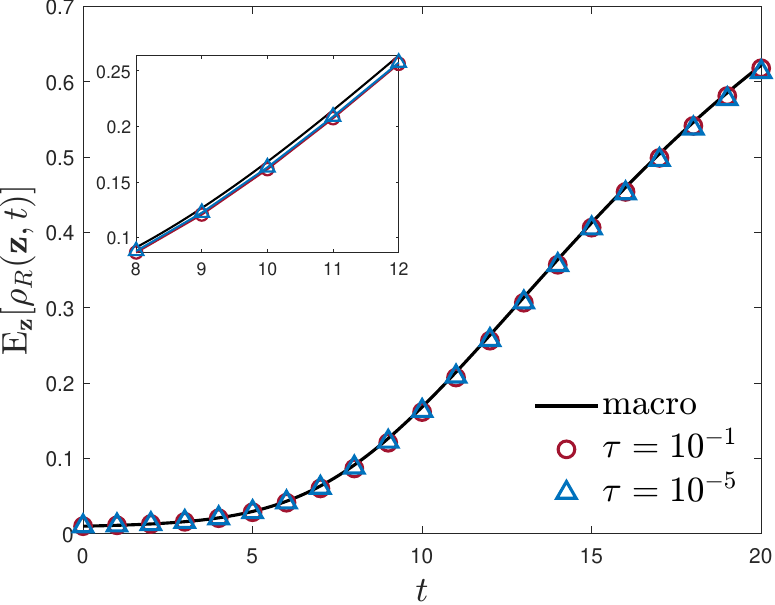}\qquad
	\includegraphics[width = 0.35\linewidth]{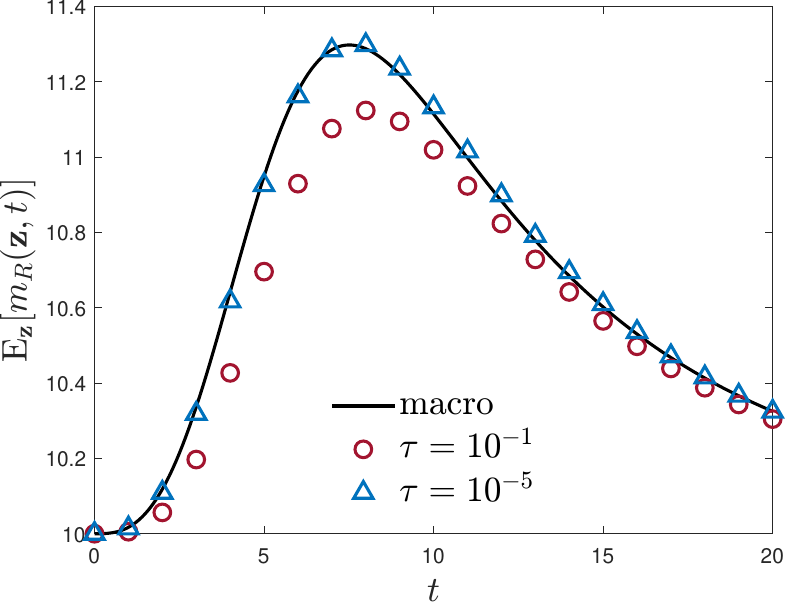}
	\caption{\small{\textbf{Test 2}. Time evolution of the mass 
	fractions (left column) and mean values (right column) obtained from the 
	integration of equation \eqref{eq:sys_vector2} with $\delta(\z)=1-2\z$, 
	$\z\sim\textrm{Bernoulli(p)}$, for $\tau=10^{-1},\,10^{-5}$, together with 
	the evolution of the mass fraction and mean values of the macroscopic model 
	\eqref{eq:mass}-\eqref{eq:seirAme2}, in the uncontrolled scenario. In both 
	cases, we fix $p=1/2$ and the epidemiological parameters as in Table 
	\ref{tab:fittedparam}. Kinetic equations solved with $\Delta x=0.02$ in the 
	interval $[0,500]$, and $\Delta t=0.1$ with $T=20$. Initial distribution as 
	in \eqref{eq:test1_init}.}}
%	\label{fig:test3_closure}
	\label{fig:figure3}
\end{figure}

We consider the coupled system^^>\eqref{eq:mass}--\eqref{eq:seirAme2} with the 
underlying assumption that the random variable^^>$\z$ follows a Bernoulli 
distribution of parameter^^>$p$ as in \eqref{eq:delta}. In the following we will fix $p = 1/2$.
We numerically check the consistency of the derived macroscopic closure of the 
kinetic model which leads to the system^^>\eqref{eq:mass}--\eqref{eq:seirAme2} 
in the limit $\tau\to0^+$. We solve the coupled ODEs with a 
fourth-order Runge--Kutta method with $\Delta t=0.05$, the kinetic system 
\eqref{eq:sys_vector2} is solved with the same discretization described in 
Section \ref{sec:uncontrolled}, with the initial conditions 
\eqref{eq:test1_init}. The epidemiological parameters are summarized in Table 
\ref{tab:fittedparam}.
%In Figure^^>\ref{fig:test3_closure}
In Figure^^>\ref{fig:figure3}
we observe that smaller values of the time scale $\tau$ 
corresponds to better time-by-time accordance between the kinetic equations and the macroscopic model.

\subsection{Test 3: Controlled model and uncertainty damping}
\begin{figure}
	\centering
	\includegraphics[width = 0.45\linewidth]{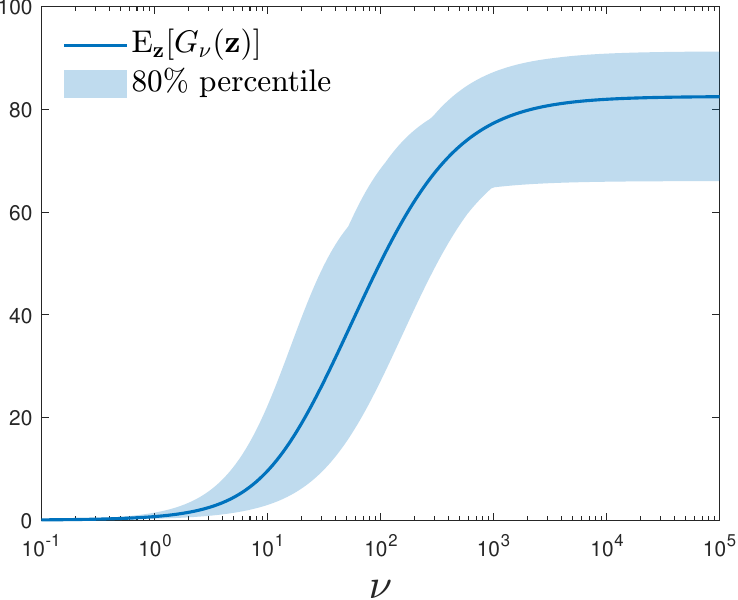}
	\includegraphics[width = 0.45\linewidth]{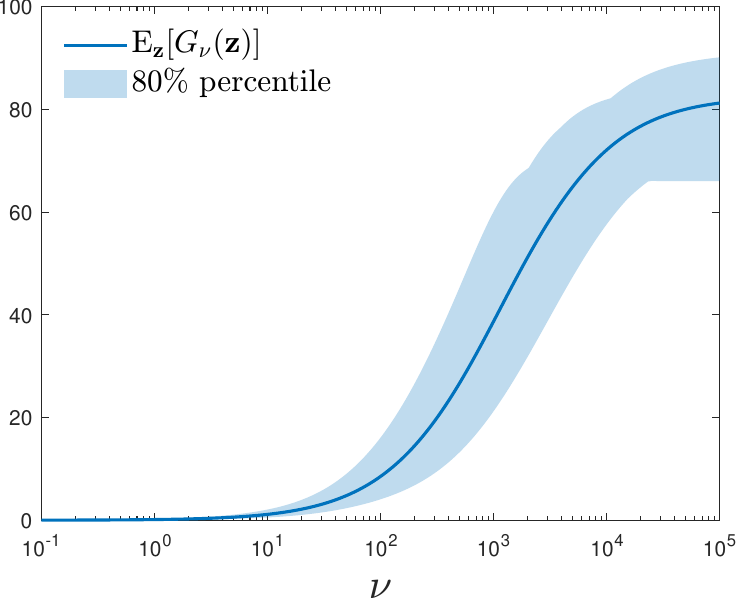}
	\includegraphics[width = 0.45\linewidth]{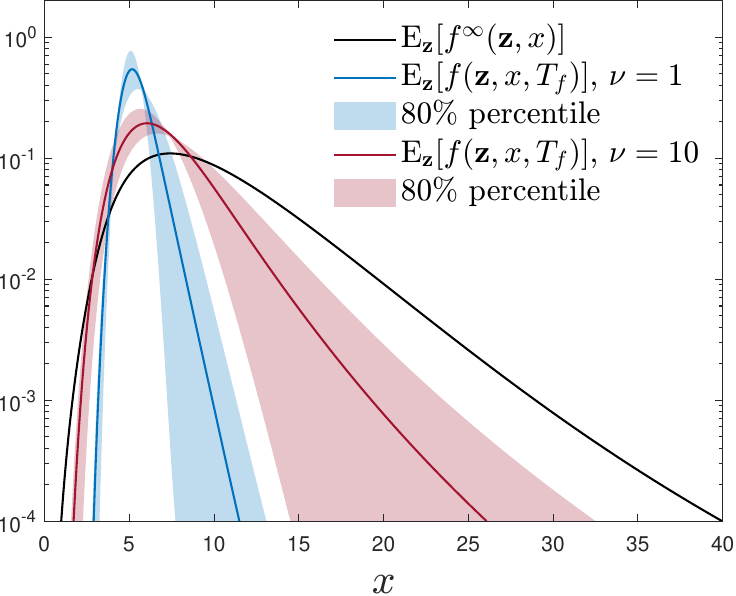}
	\includegraphics[width = 0.45\linewidth]{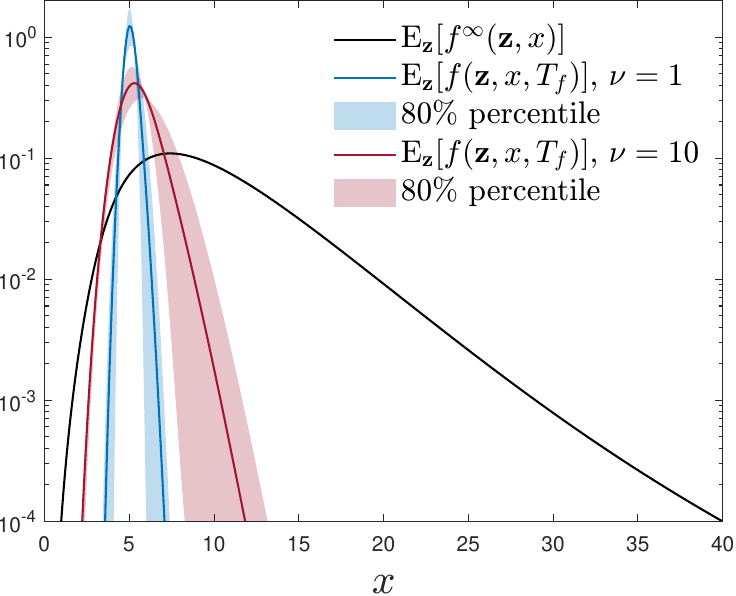}
	\caption{\small{\textbf{Test 3.} Top 
	row: expectation of $G_\nu(\z)$ defined in \eqref{eq:G_nu} with 
	$x_{T,J}=5$, versus the penalization $\nu$, for $S(x)=1$ (left) and 
	$S(x)=\sqrt{x}$ (right). Bottom row: details of the expected values of the 
	distributions: the black line represents the uncontrolled distribution at 
	the equilibrium, the blue and red lines are the controlled distribution for 
	$\nu=1,10$, at the fixed time \reply{$T_f=1$}, for the selective functions $S(x)=1$ 
	(left) and $S(x)=\sqrt{x}$ (right). In all the simulations we choose 
	$\Delta x=0.02$ in the interval $[0,500]$, $\Delta t=0.1$ with \reply{$T_f=1$} and 
	$\tau=10^{-5}$. The sG expansion is $M=5$, the uncertain parameter is 
	$\delta(\z)=\z$ with $\z\sim\mathcal{U}([-1,1])$. Initial conditions given 
	by \eqref{eq:test0}.}}
%	\label{fig:test2_damping}
	\label{fig:figure4}
\end{figure}

\begin{figure}
	\centering
	\includegraphics[width = 0.35\linewidth]{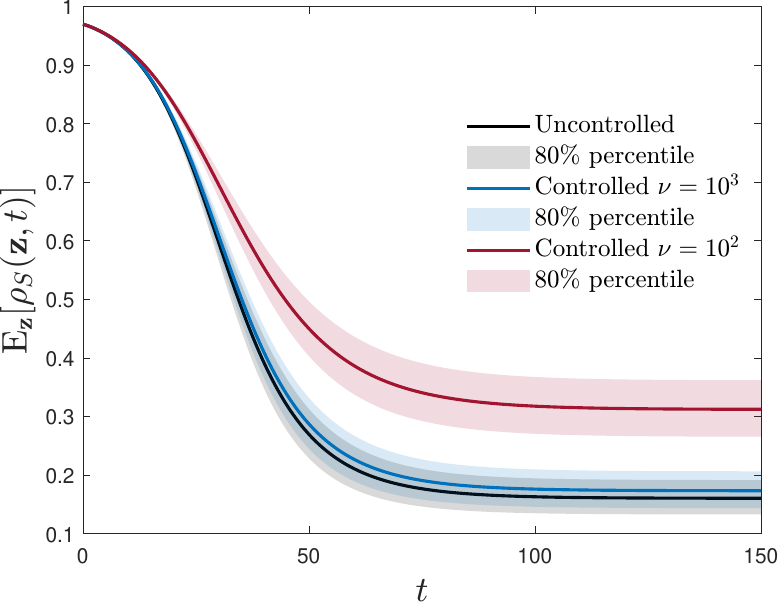}\qquad
	\includegraphics[width = 0.35\linewidth]{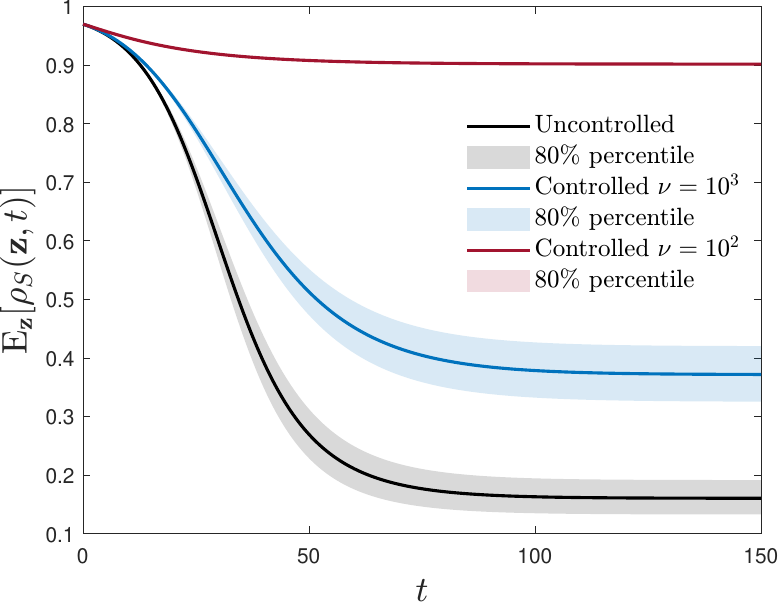}\\
	\includegraphics[width = 0.35\linewidth]{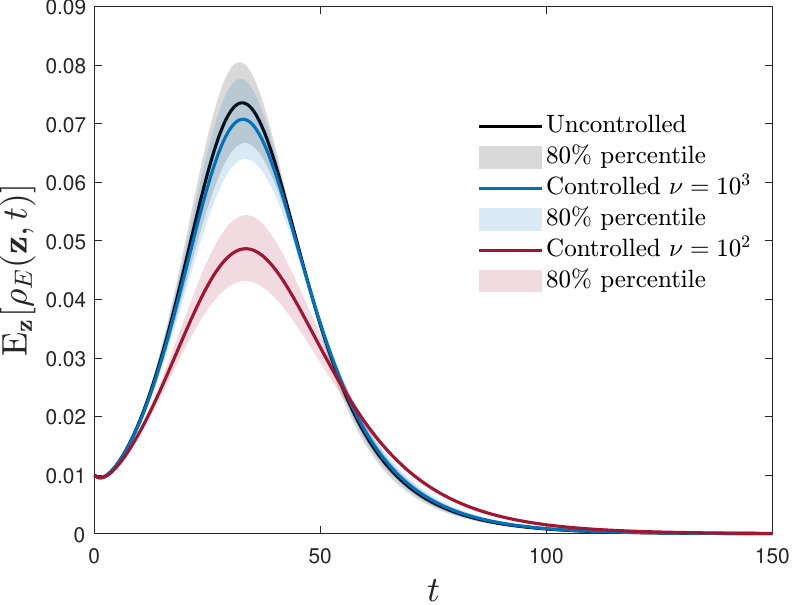}\qquad
	\includegraphics[width = 0.35\linewidth]{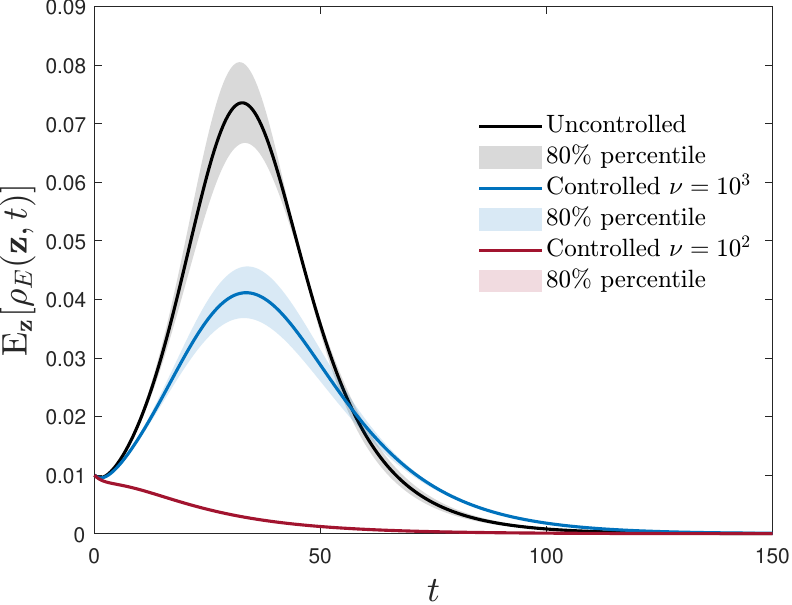}\\
	\includegraphics[width = 0.35\linewidth]{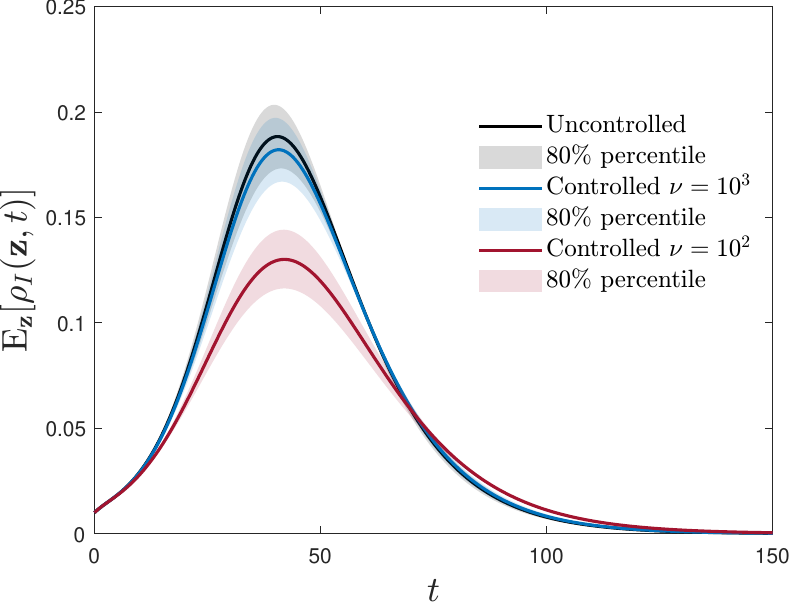}\qquad
	\includegraphics[width = 0.35\linewidth]{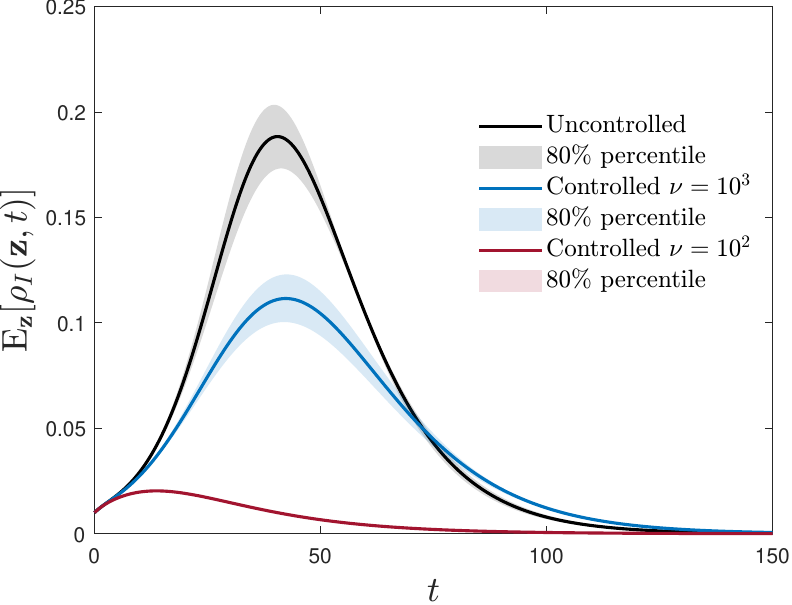}\\
	\includegraphics[width = 0.35\linewidth]{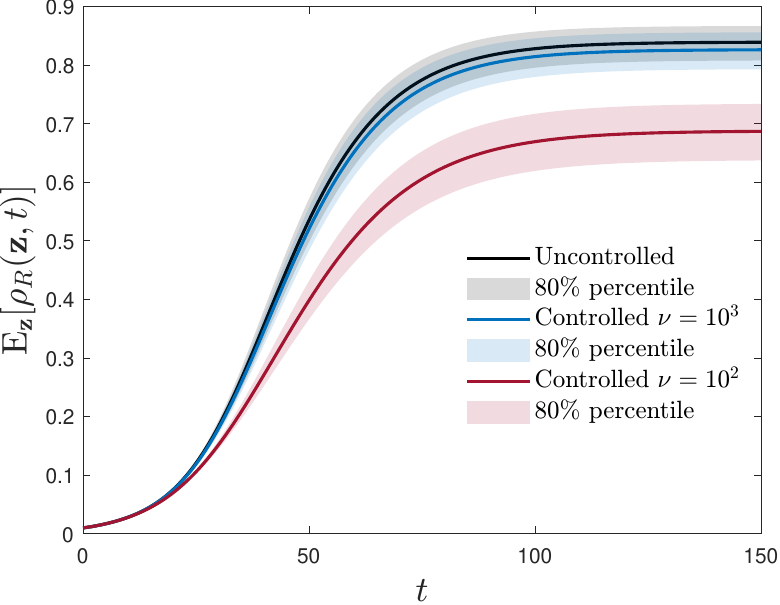}\qquad
	\includegraphics[width = 0.35\linewidth]{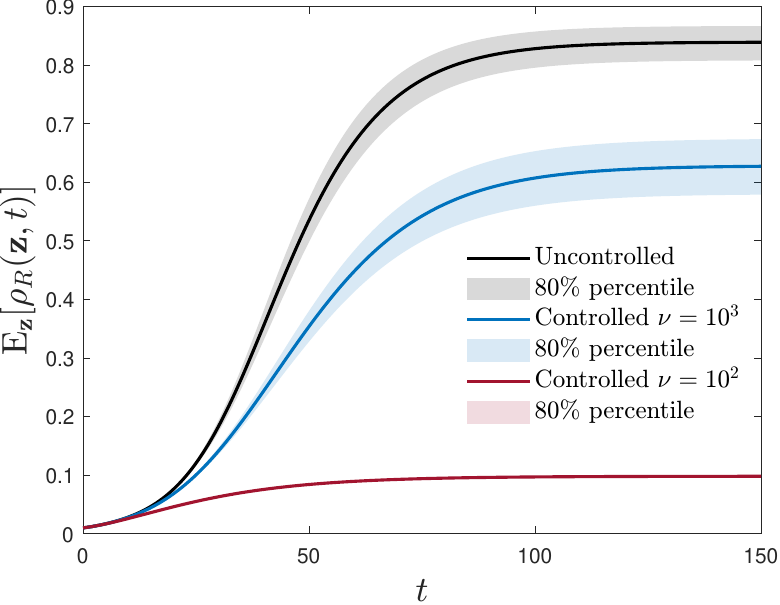}
	\caption{\small{\textbf{Test 3.} Time evolution 
	of the expectations of the masses $\rho_J(\z,t)$ for every compartment 
	$J=\{S,E,I,R\}$. The black line is the uncontrolled scenario, the red and 
	blue lines are the controlled time evolution for $\nu=10^2,10^3$ 
	respectively, for the selective functions $S(x)=1$ (left column) and 
	$S(x)=\sqrt{x}$ (right column). We choose $\delta(\z)=\z$ with 
	$\z\sim\mathcal{U}([-0.5,0.5])$, $\Delta x=0.02$ in the interval $[0,500]$, 
	and $\Delta t=0.1$ with $T=150$ and $\tau=10^{-5}$. The sG expansion is of 
	order $M=5$. Initial conditions are given in \eqref{eq:test1_init}.}}
%	\label{fig:test2_controlled_SEIR}
	\label{fig:figure5}
\end{figure}

Let us consider now the controlled model. We concentrate 
first on the contact dynamics of a single generic compartment $J$ without 
epidemic exchange, i.e. a component of \eqref{eq:FPsG}. We are indeed 
interested in evaluating the effectiveness of the designed control in reducing 
the tails of the distributions and damping the uncertainties of the system. The 
parameters, the space and time discretization and the initial conditions are 
chosen as in Section \ref{sec:sGmethod}. We fix $M=5$ and $x_{T,J}=5$.

We choose two different selective functions, in the first case we assume $S(x)\equiv 1$, corresponding to a control that is uniform over the population being independent from the number of contacts. We consider then the selective  case with $S(x)=\sqrt{x}$, the resulting control has a stronger impact on agents  with a higher numbers of contacts. To quantify the effectiveness in reducing uncertainty of the adopted control strategy, we define an index that measures the distance from the target $x_T$ and the variability at a given time \reply{$T_f > 0$ }
\cite{Medaglia22}
\( \label{eq:G_nu}
G_\nu(\z)=\intRp (x-x_T)^2 f_J(\z,x,\reply{T_f}) dx. 
\)
On the top row of
%Figure^^>\ref{fig:test2_damping},
Figure^^>\ref{fig:figure4},
we show the expectation of 
$G_\nu(\z)$ versus the penalization coefficient $\nu$, for the chosen selective 
functions. We observe that the control $S(x)=\sqrt{x}$ is more efficient than 
the uniform selection, in the sense that reduces more both the variability and 
the distance from the target for a fixed penalization^^>$\nu$, as discussed in 
Section \ref{sec:damping}.

On the bottom row of
%Figure \ref{fig:test2_damping},
Figure^^>\ref{fig:figure4},
we display in 
semilogarithmic scale the expectation of the uncontrolled distribution 
$f^\infty(\z,x)$ at the equilibrium, together with the expectations of the 
numerical solution of \eqref{eq:FPsG} at the fixed time \reply{$T_f=1$}, for 
penalizations $\nu=1,10$. Note how the introduced control is capable to change the behavior of the tails of the distribution.

Then, we consider the full model \eqref{eq:sys_vector2} with epidemic exchange. 
In particular, we are interested in understanding whether the control on the 
contact dynamics is able to reduce the spreading of the epidemics and the 
variability due to the uncertain parameter. To this end, we consider the 
computational setting of Section \ref{sec:uncontrolled} with 
$\z\sim\mathcal{U}([-0.5,0.5])$, \reply{$T_f=150$} and the selective functions 
$S(x)=1,\,\sqrt{x}$.
%In Figure^^>\ref{fig:test2_controlled_SEIR}
In Figure^^>\ref{fig:figure5}
we compare the 
time evolution of the expectations of the masses $\rho_J(\z,t)$ in the 
uncontrolled scenario (black) and under the action of the control with 
$\nu=10^3,10^2$ (blue and red), for all the compartments. We observe that the 
control is able to increase the fraction of Susceptible (first row) at the 
equilibrium and to reduce the Removed (fourth row), but also to dampen the 
peaks of Exposed (second row) and Infected (third row), meaning that the 
epidemics has spread less. As expected, with a fixed penalization, the 
selective control $S(x)=\sqrt{x}$ is more efficient than the uniform one, and 
it is also capable of reducing the uncertainties on the results, as we can 
notice from the right column, red lines, of
%Figure^^>\ref{fig:test2_controlled_SEIR}.
Figure^^>\ref{fig:figure5}

\subsection{Test 4: A data-oriented approach}
As remarked at the beginning of the section, and following the approach 
proposed in^^>\cite{Dimarco22}, we will focus on the first wave of the 
SARS-CoV-2 epidemic during the first half of 2020, particularly in the case of 
Italy. There, the first detected case was on January, 30th, while the first 
containment measures were applied on March, 9th.

%We consider the coupled system^^>\eqref{eq:mass}--\eqref{eq:seirAme2} with the 
%underlying assumption that the random variable^^>$\z$ follows a Bernoulli 
%distribution of parameter^^>$p$, and $\delta(\z)=1-2\z$ so that $\delta(\z) = 
%-1$ with probability^^>$p$ and $\delta(\z) = 1$ with probability^^>$1 - p$.
%
%Before calibrating the parameters, we numerically check the closure of the 
%kinetic model which leads to the system^^>\eqref{eq:mass}--\eqref{eq:seirAme2} 
%in the limit $\tau\to0$. We present here the uncontrolled scenario, however the 
%same holds also in presence of the control. We solve the coupled ODEs with a 
%fourth-order Runge--Kutta method with $\Delta t=0.05$, the kinetic system 
%\eqref{eq:sys_vector2} is solved with the same discretization described in 
%Section \ref{sec:uncontrolled}, with the initial conditions 
%\eqref{eq:test1_init}. The epidemiological parameters are summarized in Table 
%\ref{tab:fittedparam}, to which we will return later. In Figure 
%\ref{fig:test3_closure} we observe that smaller values of the time scale $\tau$ 
%corresponds to better time-by-time accordance between the kinetic equations and 
%the macroscopic model.

\subsubsection{Test 4a: Calibration of the model}
The first step of the calibration is to estimate the unknown epidemiological 
parameters in the unconstrained regime, assuming that no restriction on the 
number of contacts was having place, which translates into having 
$G_J(f_J^\infty)(\z,t) \equiv 0$. We fixed the known clinical parameters in 
agreement with the available literature of the field (see, 
e.g.,^^>\cite{Gatto20,Dimarco22} and references therein). In all subsequent 
figures, we highlighted the evolution of system^^>\eqref{eq:mass}--\eqref{eq:seirAme2} 
obtained in the deterministic cases $\delta \equiv -1$ or $\delta \equiv 1$. Also, we choose  the case $p = 1/2$, to show the performance 
in an intermediate case. 

As done in^^>\cite{Dimarco22}, we solved a least square problem to minimize the 
relative $L^2$ norm of the difference between the reported number of infected 
$\widehat{\rho_I}$ and recovered $\widehat{\rho_R}$, and the theoretical 
evolution of the model $\rho_I(t)$
and $\rho_R(t)$, with $t$ varying in the timespan^^>$[t_0, t_L]$ preceding the 
lockdown regime. For what concerns the initial data, we assume that 
$\rho_E(t_0) = \rho_I(t_0) = \rho_R(t_0) = 1$, i.e., $t_0$ marks nearly the 
start of the epidemics, while for the average initial number of contacts we set 
$m_S(t_0) = m_E(t_0) =  m_R(t_0) = 10$, in agreement with
the experimentally observed mean number of contacts in a Western country
before the pandemic^^>\cite{Beraud15}. In order to take into account illness 
and quarantine periods for infected individuals, we fixed their mean number of 
contacts to be $m_I(t) \equiv 3$ throughout their infection, which corresponds
to the average number of family contacts. Thus, the constrained minimization 
problem is the following:
\(
\min_{\beta,\lambda}
\left[
(1 - \theta)\norma{\rho_I(t) - \widehat{\rho_I}(t)}_{L^2([t_0,t_L])} + 
\theta\norma{\rho_R(t) - \widehat{\rho_R}(t)}_{L^2([t_0,t_L])}\right],
\label{eq:minproblem1}
\)
where $\theta \in [0, 1]$, $\norma{\,\cdot\,}_{L^2([t_0,t_L])}$ is the relative 
norm over the time horizon $[t_0,t_L]$, while we constrained $\beta$ to belong 
to the interval $[0, 0.01]$ and $\lambda$ to satisfy $3< \lambda \le 10$.
\begin{table}\centering
\begin{tabular}{lcccc}
\toprule
Parameters & $\zeta$ & $\gamma$ & $\beta$          & $\lambda$\\
\midrule
Values & $1/3.32$  & $1/10$     & $0.0176\div0.0226$   & $5$\\
\bottomrule
\end{tabular}
\caption{Parameters relative to system^^>\eqref{eq:mass}--\eqref{eq:seirAme2} and  obtained by solving problem^^>\eqref{eq:minproblem1} for $p \in [0,1]$, with constrains $\beta \in [0, 10^{-2}]$ and $\lambda \in (3, 10]$. In all tests we considered $\theta = 10^{-3}$, to better capture the trend for the infected cases.}
\label{tab:fittedparam}
\end{table}

In Table^^>\ref{tab:fittedparam} we report the parameters obtained by solving 
problem^^>\eqref{eq:minproblem1} for different choices of the parameter $p$, 
where we fixed the norm coefficient^^>$\theta=10^{-3}$, to better observe the 
trend with respect to the infectious individuals.
\begin{figure}
    \centering
    \includegraphics[width=0.5\linewidth]{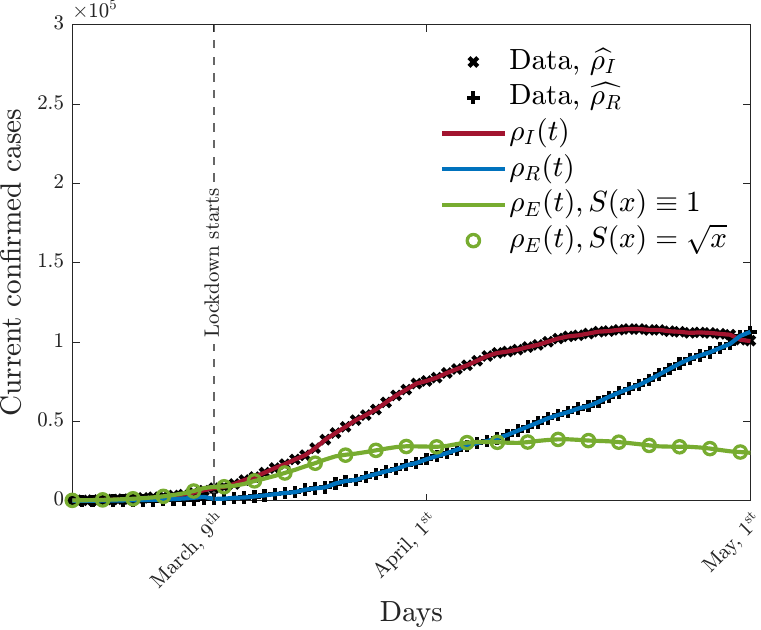}
    \caption{\small \textbf{Test 4a.} Comparison between data 
    relative to reported infected and recovered people (respectively, black 
    crosses and black plus signs) and time evolution of the mass fractions of 
    infectious agents^^>$\rho_I(t)$ (red solid line) and removed 
    agents^^>$\rho_R(t)$ (blue solid line), as prescribed by 
    system^^>\eqref{eq:mass}--\eqref{eq:seirAme2} with target $x_T(t)$ obtained 
    by solving problem^^>\eqref{eq:mincontrol}. We also reported the evolution 
    in time of the mass fraction of the exposed compartment^^>$\rho_E(t)$ 
    (green solid line and green circles). In all cases both selection functions 
    $S(x) \equiv 1$ and $S(x) = \sqrt{x}$ were employed, but we report distinct 
    curves only for the exposed agents for better clarity, since in all cases 
    we obtain nearly superimposable results, also with respect to different 
    choices of $p$. Epidemiological parameters as in 
    Table^^>\ref{tab:fittedparam}. }
%    \label{fig:fit-ctrl-infect}
	\label{fig:figure6}
\end{figure}

\subsubsection{Test 4b: Assessment of different restriction strategies}

Once all the epidemiological parameters are estimated, we can focus on the 
constrained regime, i.e., the
subsequent lockdown phase. Within the framework of our model, we can interpret 
the lockdowns enforced during the first wave of the pandemic in Western Europe 
as a form of control strategy whose associated selection function^^>$S(x)$ is 
uniform with respect to the number of contacts^^>$x$.
\begin{figure}
\includegraphics[width=0.5\linewidth]{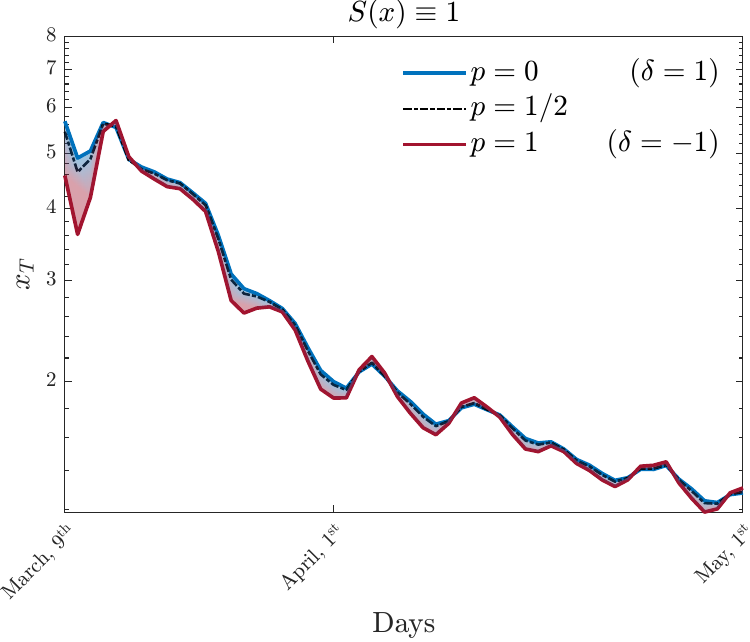}%
\includegraphics[width=0.5\linewidth]{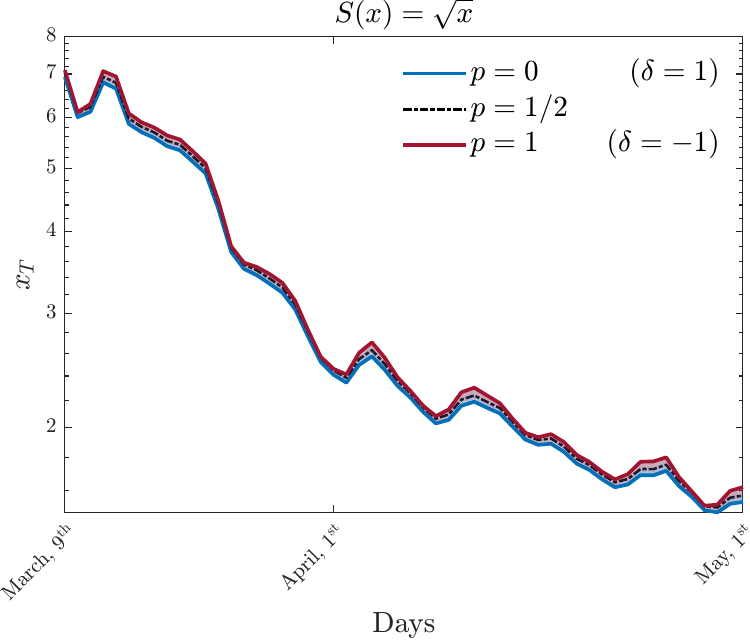}\\[2ex]
\includegraphics[width=0.5\linewidth]{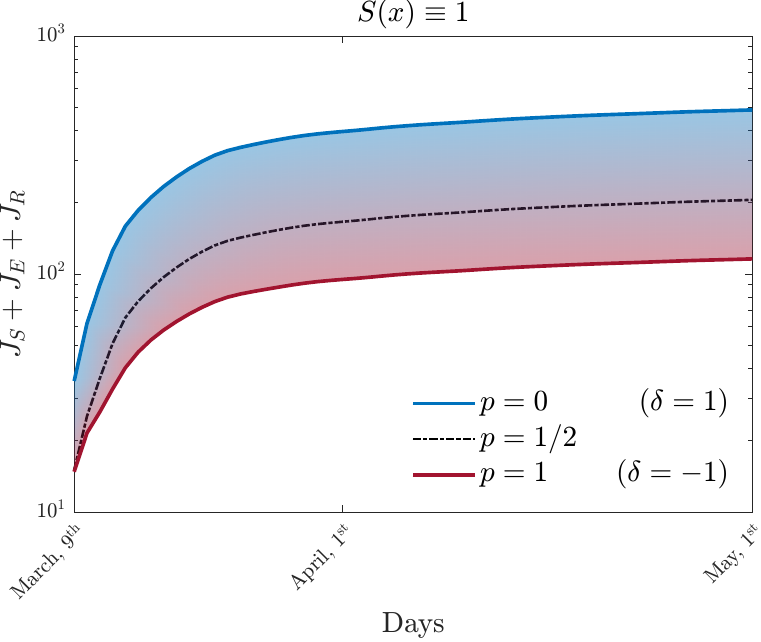}%
\includegraphics[width=0.5\linewidth]{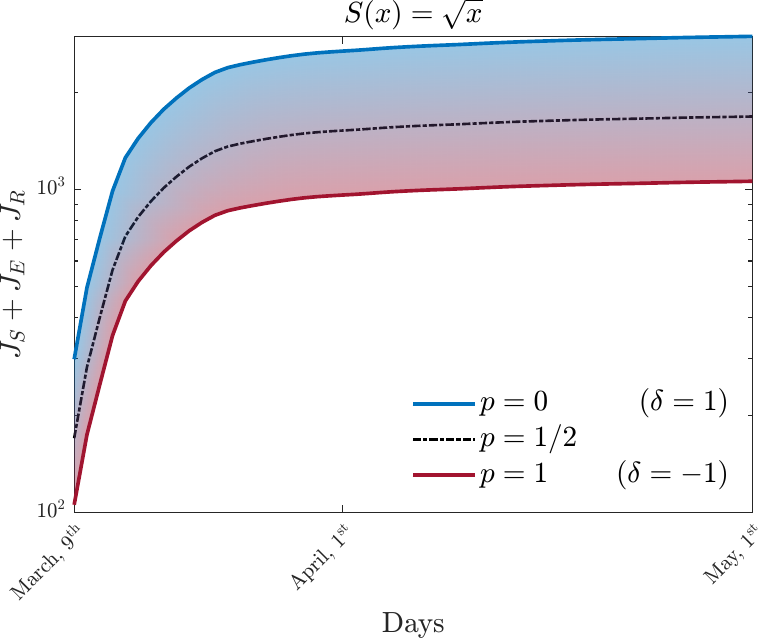}
    \caption{\small \textbf{Test 4b.} Time evolution in 
    semi-log scale of the targets $x_T(t)$ (top row) and associated total costs 
    (bottom row), obtained by solving problem^^>\eqref{eq:mintarget} and then 
    computing the cost with^^>\eqref{eq:Jcost}. On the left column, we have the 
    evolution in time of fitted targets and related cost for selection 
    function^^>$S(x) \equiv 1$; on the right column those relative to selection 
    function^^>$S(x) = \sqrt{x}$. We set $\delta(\z) = 1 - 2\z$, $\z \sim 
    \mathrm{Bernoulli}(p)$, with $p = 0$, $1/2$, and $1$. Epidemiological 
    parameters are reported in Table^^>\ref{tab:fittedparam}.}
%    \label{fig:target}
	\label{fig:figure7}
\end{figure}
With this perspective, it is interesting to compute the optimal target value 
$x_T$ in the control term which permits to fit the data. As a simplifying 
assumption, we assume this value $x_T$ equal
for each compartment and we study the two cases of uniform and selective
restrictions, which can be obtained by fixing in the dynamics $S (x) = 1$ and 
$S (x) = \sqrt{x}$, respectively, while we can compute $G_J(f_J^\infty)(\z,t)$ 
by equation^^>\eqref{eq:CJfinfty}. Hence, we solve an optimization
problem in the lockdown timespan $[t_L + 1, t_f ]$, for a sequence of time
steps $t^n$ over a moving time window of one week (we tried to keep the 
notation consistent with the one in^^>\cite{Dimarco22}). Again, it is a 
constrained least-square problem:
\(
\min_{x_T(t^n)\in \R^+}
\left[
(1 - \theta)\norma{\rho_I (t) - \widehat{\rho_I} (t)}_{L^2([t^n-k_L,t^n + k_r 
])}
+\theta\norma{\rho_R(t) - \widehat{\rho_R}(t)}_{L^2([t^n-k_L,t^n + k_r ])}
\right],
\label{eq:mintarget}
\)
with $k_L = 3$, $k_r = 4$. We report the result of such fitting in 
%Figure^^>\ref{fig:fit-ctrl-infect},
Figure^^>\ref{fig:figure6},
along with the associated estimated 
evolution of the exposed compartment for both selection functions $S(x)\equiv 
1$ and $S(x) = \sqrt{x}$. In this case, we report the results only for the 
value $p = 1/2$, since the fitting procedure gives almost indistinguishable 
results with respect to the choice of^^>$p\in [0,1]$. 

We observe that the estimated value for the target^^>$x_T$ is higher when a 
selective lockdown is enforced, meaning that employing a non-uniform control 
strategy would achieve the same effects with respect to the number of infected 
people while allowing greater sociality, especially for the first part of the 
restriction period.

We also computed the total cost of such measures as the sum of the functionals 
$J_S + J_E + J_R$, where $J_H$ is defined for $H \in \I$ as
\(
J_H = \frac12 \intRp \lrp*{1 + \frac{S^2(x)}{\nu}}(x - x_T)^2 f_H^\infty(x)\, 
dx,
\label{eq:Jcost}
\)
that is, the functional^^>\eqref{eq:Jfunctional} can be seen as the 
instantaneous approximation of^^>$J_H$, which is obtained by 
considering^^>\eqref{eq:ustar} in the limit^^>$\epsilon,\tau \to 0^+$ 
(see^^>\cite{Dimarco22}). We see that the cost is strongly influenced by the 
considered selective strategy.
%In Figure^^>\ref{fig:target}
In Figure^^>\ref{fig:figure7}
we report both the estimated target values^^>$x_T$ (top row) and the associated 
total cost (bottom row) for both selection functions^^>$S(x) \equiv 1$ and 
$S(x) = \sqrt{x}$ for the 
choices of $p = 0$, $1/2$ and^^>$1$. 
\begin{figure}
    \centering
   \includegraphics[width=0.33\linewidth]{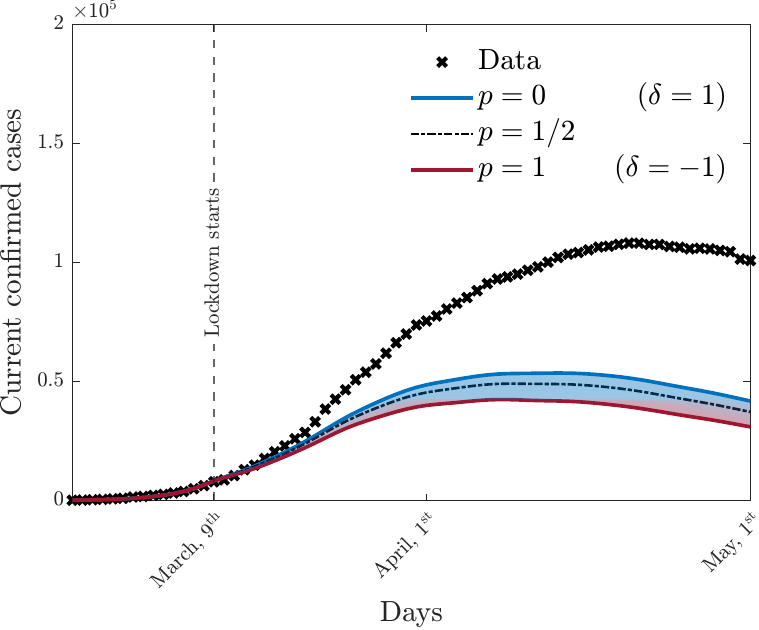}
	\includegraphics[width=0.33\linewidth]{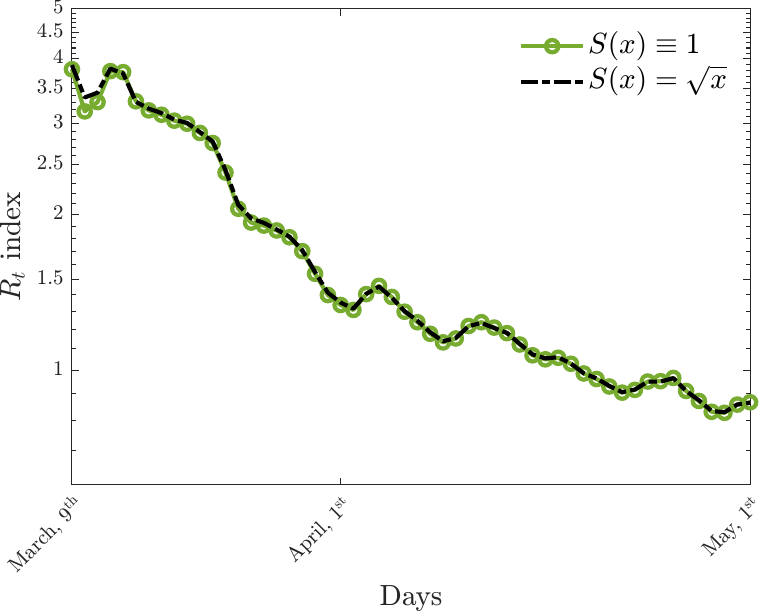}%
	\includegraphics[width=0.33\linewidth]{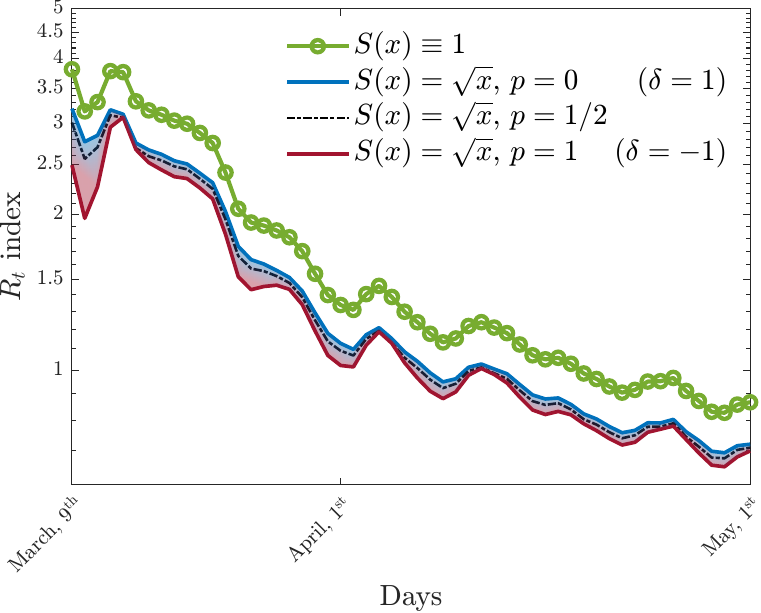}
    \caption{\small \textbf{Test 4b} Comparison 
    between data (black crosses), corresponding to a uniform control strategy 
    and time evolution of the mass fraction of infectious agents^^>$\rho_I(t)$ 
    (shaded area with solid borders in color) as prescribed by 
    system^^>\eqref{eq:mass}--\eqref{eq:seirAme2} with selection 
    function^^>$S(x) = \sqrt{x}$. We set $\delta(\z) = 1 - 2\z$, $\z \sim 
    \mathrm{Bernoulli}(p)$, with $p = 0$, $1/2$, and $1$. Epidemiological 
    parameters as in Table^^>\ref{tab:fittedparam}.}
%    \label{fig:ctrld-infect}
	\label{fig:figure8}
\end{figure}

Finally, it is interesting to consider a retrospective analysis where the 
estimated $x_T$ associated to $S(x) \equiv 1$ is instead implemented in the 
dynamics with selective control. This means fixing the number of social 
contacts achievable with different selective functions and comparing the 
results on the evolution of the epidemics.
%In Figure^^>\ref{fig:ctrld-infect}
In Figure^^>\ref{fig:figure8}
we show 
the evolution of the disease in the presence of selective control with a target estimated by the uniform control. We observe that the peak of the epidemic is 
effectively reduced, suggesting that a selective
control strategy is an effective choice in fighting the spreading of the 
infection even in case of contact uncertainties. This 
extends the findings reported in^^>\cite{Dimarco22}, which proposed encouraging 
results in this way and a slim-tailed 
contact distribution.

\section*{Conclusion}
In this paper, we concentrated on the definition of non-pharmaceutical interventions in the presence of an uncertain contact distribution of the system of agents. To this end, we introduced a mathematical description of the epidemic by integrating an SEIR compartmental model with kinetic equations with uncertainties. Hence, we introduced a selective control strategy to force the number of contacts towards a fixed target. Observable effects of the control are then derived at the macroscopic level of description through classical methods of kinetic theory. Furthermore, we have proved that it is possible to reduce the variability of the mean number of connections and, therefore, to lower the impact of missing information on the system of agents. Possible extensions of the presented approach will concentrate on more sophisticated compartmentalizations.

\section*{Acknowledgements}
This work has been written within the activities of the GNFM group of INdAM (National Institute of High Mathematics). MZ acknowledges partial support of MUR-PRIN2020 Project No.2020JLWP23 (Integrated Mathematical Approaches to Socio-Epidemiological Dynamics). MZ and AM acknowledge the support of the Banff International Research Station (BIRS) for the Focused Research Group [22frg198] “Novel perspectives in kinetic equations for emerging phenomena”, July 17-24, 2022, where part of this work was done.

\bibliographystyle{abbrv}
\bibliography{SEIR_UQ.bib}

\begin{thebibliography}{10}

\bibitem{ACFK}
G.~Albi, Y.-P. Choi, M.~Fonasier, and D.~Kalise.
\newblock Mean field control hierarchy.
\newblock {\em Appl. Math. Optim.}, 76(1):93--135, 2017.

\bibitem{albi18}
G.~Albi and L.~Pareschi.
\newblock Selective model-predictive control for flocking systems.
\newblock {\em Commun. Appl. Ind. Math.}, 9(2):4--21, 2018.

\bibitem{Albi14}
G.~Albi, L.~Pareschi, and M.~Zanella.
\newblock Boltzmann-type control of opinion consensus through leaders.
\newblock {\em Phil. Trans. R. Soc. A}, 372(2028):20140138, 2014.

\bibitem{APZ21_a}
G.~Albi, L.~Pareschi, and M.~Zanella.
\newblock Control with uncertain data of socially structured compartmental
  epidemic models.
\newblock {\em J. Math. Biol.}, 82(63), 2021.

\bibitem{APZ21_b}
G.~Albi, L.~Pareschi, and M.~Zanella.
\newblock Modelling lockdown measures in epidemic outbreaks using selective
  socio-economic containment with uncertainty.
\newblock {\em Math. Biosci. Eng.}, 18(6):7161--7190, 2021.

\bibitem{Beraud15}
G.~B{\'e}raud, S.~Kazmercziak, P.~Beutels, D.~Levy-Bruhl, X.~Lenne,
  N.~Mielcarek, Y.~Yazdanpanah, P.-Y. Bo\"elle, N.~Hens, and B.~Dervaux.
\newblock The french connection: The first large population-based contact
  survey in france relevant for the spread of infectious diseases.
\newblock {\em PLoS ONE}, 10(7), 2015.

\bibitem{Gatto2}
E.~Bertuzzo, L.~Mari, D.~Pasetto, S.~Miccoli, R.~Casagrandi, M.~Gatto, and
  A.~Rinaldo.
\newblock The geography of covid-19 spread in italy and implications for the
  relaxation of confinement measures.
\newblock {\em Nature Communications}, 11(1):1--11, 2020.

\bibitem{Brauer19}
F.~Brauer, C.~Castillo-Chavez, and Z.~Feng.
\newblock {\em Mathematical Models in Epidemiology}, volume~32.
\newblock Springer, 2019.

\bibitem{britton2020}
T.~Britton, F.~Ball, and P.~Trapman.
\newblock A mathematical model reveals the influence of population
  heterogeneity on herd immunity to {SARS}-{C}o{V}-2.
\newblock {\em Science}, 369(6505):846--849, 2020.

\bibitem{Carrillo08}
J.~A. Carrillo, J.~Rosado, and F.~Salvarani.
\newblock 1{D} nonlinear {F}okker-{P}lanck equations for fermions and bosons.
\newblock {\em Appl. Math. Lett.}, 21(2):148--154, 2008.

\bibitem{cerc}
C.~Cercignani.
\newblock {\em The Boltzmann Equation and its Applications}, volume~67.
\newblock Springer, 1988.

\bibitem{cordier_05}
S.~Cordier, L.~Pareschi, and G.~Toscani.
\newblock On a kinetic model for a simple market economy.
\newblock {\em J. Stat. Phys.}, 120:253--277, 2005.

\bibitem{DMLT}
R.~Della~Marca, N.~Loy, and A.~Tosin.
\newblock An {SIR}-like kinetic model tracking indviduals' viral load.
\newblock {\em Netw. Heterog. Media}, 17(3):467--494, 2022.

\bibitem{Diekmann00}
O.~Diekmann and J.~A.~P. Heesterbeek.
\newblock {\em Mathematical Epidemiology of Infectious Diseases: Model
  Building, Analysis and Interpretation}, volume~5.
\newblock John Wiley \& Sons, 2000.

\bibitem{DPTZ20}
G.~Dimarco, L.~Pareschi, G.~Toscani, and M.~Zanella.
\newblock Weatlh distribution under the spread of infectious diseases.
\newblock {\em Phys. Rev. E}, 102(022303), 2020.

\bibitem{Dimarco22_2}
G.~Dimarco, L.~Pareschi, and M.~Zanella.
\newblock Micro-macro stochastic {G}alerkin methods for nonlinear
  {F}okker-{P}lank equations with random inputs.
\newblock {\em Preprint arXiv:2207.06494}, 2022.

\bibitem{Dimarco21}
G.~Dimarco, B.~Perthame, G.~Toscani, and M.~Zanella.
\newblock Kinetic models for epidemic dynamics with social heterogeneity.
\newblock {\em J. Math. Biol.}, 83(1):1--32, 2021.

\bibitem{Dimarco22}
G.~Dimarco, G.~Toscani, and M.~Zanella.
\newblock Optimal control of epidemic spreading in the presence of social
  heterogeneity.
\newblock {\em Phil. Trans. R. Soc. A.}, 380(2224):20210160, 2022.

\bibitem{Dolbeault21}
J.~Dolbeault and G.~Turinici.
\newblock Social heterogeneity and the {COVID}-19 lockdown in a multi-group
  {SEIR} model.
\newblock {\em Comp. Math. Biophys.}, 9:14--21, 2021.

\bibitem{durham12}
D.~P. Durham and E.~A. Casman.
\newblock Incorporating individual health-protective decisions into disease
  transmission models: a mathematical framework.
\newblock {\em J. R. Soc. Interface}, 9:562--570, 2012.

\bibitem{El11}
H.~El~Maroufy, A.~Lahrouz, and P.~Leach.
\newblock Qualitative behaviour of a model of an {SIRS} epidemic: stability and
  permanence.
\newblock {\em Appl. Math. Inf. Sci}, 5(2):220--238, 2011.

\bibitem{Escobedo91}
M.~Escobedo and E.~Zuazua.
\newblock Large time behavior for convection-diffusion equations in ${R}^{N}$.
\newblock {\em J. Func. Anal}, 100(1):119--161, 1991.

\bibitem{Imperial}
N.~Ferguson and et~al.
\newblock Report 9: Impact of non-pharmaceutical interventions ({NPI}s) to
  reduce {COVID}-19 mortality and healthcare demand.
\newblock Technical report, Imperical College COVID-19 Response Team, 2020.

\bibitem{FPR14}
M.~Fornasier, B.~Piccoli, and F.~Rossi.
\newblock Mean-field sparse optimal control.
\newblock {\em Phil. Trans. R. Soc. A}, 372(2028), 2014.

\bibitem{fumanelli12}
L.~Fumanelli and et~al.
\newblock Inferring the structure of social contacts from demographic data in
  the analysis of infectious diseases spread.
\newblock {\em PLoS Comput. Biol.}, 8:e1002673, 2012.

\bibitem{Furioli17}
G.~Furioli, A.~Pulvirenti, E.~Terraneo, and G.~Toscani.
\newblock Fokker-{P}lanck equations in the modeling of socio-economic
  phenomena.
\newblock {\em Math. Mod. Meth. Appl. Sci}, 27(01):115--158, 2017.

\bibitem{Gatto20}
M.~Gatto, E.~Bertuzzo, L.~Mari, S.~Miccoli, L.~Carraro, R.~Casagrandi, and
  A.~Rinaldo.
\newblock Spread and dynamics of the covid-19 epidemic in italy: Effects of
  emergency containment measures.
\newblock {\em PNAS}, 117(19):10484--10491, May 2020.

\bibitem{giambiagi21}
C.~Giambiagi~Ferrari, J.~P. Pinasco, and N.~Saintier.
\newblock Coupling epidemiological models with social dynamics.
\newblock {\em Bullet. Math. Biol.}, 83(74), 2021.

\bibitem{Gualandi19}
S.~Gualandi and G.~Toscani.
\newblock Human behavior and lognormal distribution. {A} kinetic description.
\newblock {\em Math. Mod. Meth. Appl. Sci}, 29(04):717--753, 2019.

\bibitem{Hethcote00}
H.~W. Hethcote.
\newblock The mathematics of infectious diseases.
\newblock {\em SIAM Review}, 42(4):599--653, 2000.

\bibitem{LT}
N.~Loy and A.~Tosin.
\newblock A viral load-based model for epidemic spread on spatial networks.
\newblock {\em Math. Biosci. Eng.}, 18(5):5635--5663, 2021.

\bibitem{McQuade21}
S.~T. McQuade, R.~Weightman, N.~J. Merrill, A.~Yadav, E.~Tr{\'e}lat, S.~R.
  Allred, and B.~Piccoli.
\newblock Control of {COVID}-19 outbreak using an extended {SEIR} model.
\newblock {\em Math. Mod. Meth. Appl. Sci.}, 31(12):2399--2424, 2021.

\bibitem{Medaglia22}
A.~Medaglia, G.~Colelli, L.~Farina, A.~Bacila, P.~Bini, E.~Marchioni,
  S.~Figini, A.~Pichiecchio, and M.~Zanella.
\newblock Uncertainty quantification and control of kinetic models of tumour
  growth under clinical uncertainties.
\newblock {\em Int. J. Non-Linear Mech.}, 141:103933, 2022.

\bibitem{medaglia22_PDEA}
A.~Medaglia, A.~Tosin, and M.~Zanella.
\newblock Monte carlo stochastic galerkin methods for non-{M}axwellian kinetic
  models of multiagent systems with uncertaintiesalerkin methods for
  non-maxwellian kinetic models of multiagent systems with uncertainties.
\newblock {\em Partial Differ. Equ. Appl.}, 3(51), 2022.

\bibitem{Medaglia21}
A.~Medaglia and M.~Zanella.
\newblock Kinetic and macroscopic epidemic models in presence of multiple
  heterogeneous populations.
\newblock {\em Preprint arXiv:2111.05563}, 2021.

\bibitem{Pareschi21}
L.~Pareschi.
\newblock An introduction to uncertainty quantification for kinetic equations
  and related problems.
\newblock In G.~Albi, S.~Merino-Aceituno, A.~Nota, and M.~Zanella, editors,
  {\em Trails in Kinetic Theory: Foundational Aspects and Numerical Methods},
  volume~25 of {\em SEMA-SIMAI Springer Series}, pages 141--181. Springer,
  2021.

\bibitem{Pareschi13}
L.~Pareschi and G.~Toscani.
\newblock {\em Interacting Multiagent Systems: Kinetic Equations and Monte
  Carlo Methods}.
\newblock Oxford University Press, 2013.

\bibitem{Pareschi18}
L.~Pareschi and M.~Zanella.
\newblock Structure preserving schemes for nonlinear {F}okker--{P}lanck
  equations and applications.
\newblock {\em J. Sci. Comput}, 74(3):1575--1600, 2018.

\bibitem{pellis}
L.~Pellis, F.~Sacarbel, H.~B. Stage, C.~E. Overton, L.~H.~K. Chappell,
  E.~Fearon, E.~Bennett, K.~A. Lythgoe, T.~A. House, I.~Hall, and
  U.~of~Manchester COVID-19 Modelling~Group.
\newblock Challenges in control of covid-19: short doubling time and long delay
  to effect of interventions.
\newblock {\em Phil. Trans. R. Soc. B}, 376(1829):20200264, 2021.

\bibitem{PDT19}
B.~Piccoli, N.~Pouradier~Duteil, and E.~Tr{\'e}lat.
\newblock Sparse control of hegselmann-krause models: black hole and
  declustering.
\newblock {\em SIAM J. Control Optim.}, 57(4):2628--2659, 2019.

\bibitem{Preziosi21}
L.~Preziosi, G.~Toscani, and M.~Zanella.
\newblock Control of tumor growth distributions through kinetic methods.
\newblock {\em J. Theoret. Biol.}, 514:110579, 2021.

\bibitem{Tos}
G.~Toscani.
\newblock Kinetic models of opinion formation.
\newblock {\em Commun. Math. Sci.}, 4(3):481--496, 2006.

\bibitem{Toscani21}
G.~Toscani.
\newblock Statistical description of human addiction phenomena.
\newblock In G.~Albi, S.~Merino-Aceituno, A.~Nota, and M.~Zanella, editors,
  {\em Trails in Kinetic Theory: Foundational Aspects and Numerical Methods},
  volume~25 of {\em SEMA-SIMAI Springer Series}, pages 209--226. Springer,
  2021.

\bibitem{TZ_MCRF}
A.~Tosin and M.~Zanella.
\newblock Uncertainty damping in kinetic traffic models by driver-assist
  controls.
\newblock {\em Math. Contr. Relat. Fields}, 11(3):681--713, 2021.

\bibitem{villaniARMA}
C.~Villani.
\newblock On a new class of weak solutions to the spatially homogeneous
  {B}oltzmann and {L}andau equations.
\newblock {\em Arch. Ration. Mech. Anal.}, 143:273--307, 1998.

\bibitem{Xiu10}
D.~Xiu.
\newblock {\em Numerical {M}ethods for {S}tochastic {C}omputations}.
\newblock Princeton University Press, 2010.

\bibitem{Xiu02}
D.~Xiu and G.~E. Karniadakis.
\newblock The {W}iener--{A}skey polynomial chaos for stochastic differential
  equations.
\newblock {\em SIAM J. Sci. Comput}, 24(2):619--644, 2002.

\bibitem{Zanella20}
M.~Zanella.
\newblock Structure preserving stochastic {G}alerkin methods for
  {F}okker--{P}lanck equations with background interactions.
\newblock {\em Math. Comput. Simul}, 168:28--47, 2020.

\bibitem{Zanella21}
M.~Zanella, C.~Bardelli, G.~Dimarco, S.~Deandrea, P.~Perotti, M.~Azzi,
  S.~Figini, and G.~Toscani.
\newblock A data-driven epidemic model with social structure for understanding
  the {COVID}-19 infection on a heavily affected {I}talian {P}rovince.
\newblock {\em Math. Mod. Meth. Appl. Sci}, 31(12):2533--2570, 2021.

\bibitem{Zhang20}
J.~Zhang, M.~Litvinova, Y.~Liang, Y.~Wang, W.~Wang, S.~Zhao, Q.~Wu, S.~Merler,
  C.~Viboud, A.~Vespignani, M.~Ajelli, and H.~Yu.
\newblock Changes in contact patterns shape the dynamics of the {COVID}-19
  outbreak in {C}hina.
\newblock {\em Science}, 368(6498):1481--1486, 2020.

\bibitem{Zhu17}
Y.~Zhu and S.~Jin.
\newblock The {V}lasov--{P}oisson--{F}okker--{P}lanck system with uncertainty
  and a one-dimensional asymptotic preserving method.
\newblock {\em Multiscale Model. Simul}, 15(4):1502--1529, 2017.

\end{thebibliography}

\end{document}